\newtheorem{prethm}{{\bf Theorem}}
\newenvironment{thm}{\begin{prethm}{\hspace{-0.5
               em}{\bf.}}}{\end{prethm}}
\newtheorem{prepro}[prethm]{{\bf Theorem}}
\newtheorem{preprop}[prethm]{{\bf Proposition}}
\newenvironment{prop}{\begin{preprop}{\hspace{-0.5
               em}{\bf.}}}{\end{preprop}}
\newtheorem{precor}[prethm]{{\bf Corollary}}
\newenvironment{cor}{\begin{precor}{\hspace{-0.5
               em}{\bf.}}}{\end{precor}}
\newtheorem{preconj}[prethm]{{\bf Conjecture}}
\newtheorem{preremark}[prethm]{{\bf Remark}}
\newenvironment{remark}{\begin{preremark}\rm{\hspace{-0.5
               em}{\bf.}}}{\end{preremark}}
\newtheorem{preexample}[prethm]{{\bf Example}}
\newenvironment{example}{\begin{preexample}\rm{\hspace{-0.5
               em}{\bf.}}}{\end{preexample}}
\newtheorem{prelem}[prethm]{{\bf Lemma}}
\newenvironment{lem}{\begin{prelem}{\hspace{-0.5
               em}{\bf.}}}{\end{prelem}}
\newtheorem{prelam}{{\bf Lemma}}
\newtheorem{preproof}{{\bf Proof.}}
\newenvironment{proof}[1]{\begin{preproof}{\rm
               #1}\hfill{$\Box$}}{\end{preproof}}
\title{\bf \large  Some Properties of the Nil-Graphs of\\
Ideals of Commutative Rings
\thanks
{{\it Key Words}: Nil-graph; Complete graph; Bipartite graph; Genus; Independence number.}
\thanks {2010{ \it Mathematics Subject Classification}: 05C15; 05C69; 13E05; 13E10.}}
\author{{\normalsize {\sc ${}^{\mathsf{}}$}  {\sc R. Nikandish${}^{\mathsf{a}}$\thanks{Corresponding author}}, {\sc F. Shaveisi${}^{\mathsf{b}}$}} \\
 {\footnotesize{${}^{\mathsf{}}$}}\\
 {\footnotesize{${}^{\mathsf{a}}$\it Department of Basic Sciences, Jundi-Shapur University of Technology,
Dezful, Iran}}\\
{\footnotesize{${}^{\mathsf{}}$ \it {\rm P.O. Box 64615-334},}}\\
{\footnotesize{${}^{\mathsf{b}}$\it Department of Mathematics, Faculty of Sciences,
Razi University, Kermanshah, Iran
}}\\
{\footnotesize{}}\\
{\footnotesize{
$\mathsf{}$\quad\quad$\mathsf{r.nikandish@ipm.ir}$\quad\quad$\mathsf{f.shaveisi@ipm.ir}$}}}
\date{}
\begin{document}
\maketitle
\begin{abstract}
{\small Let $R$ be a commutative ring with identity and
${\rm Nil}(R)$ be the set of nilpotent elements of $R$. The nil-graph
of ideals of $R$ is defined as the graph $\mathbb{AG}_N(R)$ whose
vertex set is $\{I:\ (0)\neq I\lhd R$ and there exists a non-trivial ideal $J$ such
that $IJ\subseteq {\rm Nil}(R)\}$ and two distinct vertices $I$ and $J$
are adjacent if and only if $IJ\subseteq {\rm Nil}(R)$. Here, we study conditions under which $\mathbb{AG}_N(R)$ is complete or
bipartite. Also, the independence number of $\mathbb{AG}_N(R)$ is determined, where $R$ is
a reduced ring. Finally, we classify Artinian rings whose nil-graphs of ideals have
genus at most one.
}
\end{abstract}

\vspace{9mm} \noindent{\bf\large 1. Introduction}

When one
assigns a graph with an algebraic structure, numerous interesting algebraic problems arise from
the translation of some graph-theoretic parameters such as clique number, chromatic number,
diameter, radius and so on. There are many papers in this topic, see for example \cite{MBI}, \cite{maimani} and \cite{nilgraph1}.
Throughout this paper,
all rings are assumed to be non-domain commutative rings with
identity. By $\mathbb{I}(R)$ ($\mathbb{I}(R)^*$) and ${\rm Nil}(R)$, we
denote the set of all proper (non-trivial) ideals of $R$ and the nil-radical of $R$, respectively. The set of all maximal and minimal
prime ideals of $R$ are denoted by ${\rm Max}(R)$ and ${\rm Min}(R)$, respectively. The ring $R$
is said to be \textit{reduced}, if it has no non-zero nilpotent
element.

 Let $G$ be a graph. The degree of a vertex $x$ of $G$ is denoted by $d(x)$. The
graph $G$ is said to be \textit{$r$-regular}, if the degree of
each vertex is $r$.
The \textit{complete graph} with $n$ vertices, denoted by
$K_n$, is a graph in which any two distinct vertices are
adjacent. A \textit{bipartite graph} is a graph whose vertices can
be divided into two disjoint parts $U$ and $V$ such that every
edge joins a vertex in $U$ to one in $V$. It is well-known that a
bipartite graph is a graph that does not contain any odd cycle. A
\textit{complete bipartite graph} is a bipartite graph in which
every vertex of one part is joined to every vertex of the other
part. If the size of one of the parts is $1$, then it is said to
be a \textit{star graph}. A \textit{tree} is a connected graph without cycles. Let $S_k$ denote the sphere with $k$ handles, where $k$ is a non-negative integer, that
is, $S_k$ is an oriented surface of genus $k$. The \textit{genus} of a graph $G$, denoted by
$\gamma(G)$, is the minimal integer $n$ such that the graph can be embedded in $S_n$. A genus 0 graph is called a \textit{planar graph}. It is well-known that
$$\gamma(K_n)=\lceil\frac{(n-3)(n-4)}{12}\rceil {\rm ~for~all~} n\geq 3,$$

$$\gamma(K_{m,n})=\lceil\frac{(m-2)(n-2)}{4}\rceil, {\rm ~for~all~} n\geq 2~{\rm and}~m\geq 2.$$
For a
graph $G$, 
the \textit{independence number} of $G$
is denoted by $\alpha(G)$. For more details about the used
terminology of graphs, see \cite{west}.

 We denote the annihilator of an ideal $I$ by $Ann(I)$. Also, the ideal $I$ of $R$ is called an \textit{annihilating-ideal} if $Ann(I)\neq (0)$. The notation $\mathbb{A}(R)$ is used for the set of all
annihilating-ideals of $R$. By the \textit{annihilating-ideal
graph} of $R$, $\mathbb{AG}(R)$, we mean the graph with vertex
set $\mathbb{A}(R)^{*}= \mathbb{A}(R)\setminus \{0\}$ and two
distinct vertices $I$ and $J$ are adjacent if and only if $IJ=0$.
Some properties of this graph have been studied in
\cite{disc.math,alal,behboodigenus,MBI,MBII}. In \cite{nilgraph1}, the authors have introduced another kind
of graph, called the nil-graph of ideals. The nil-graph
of ideals of $R$ is defined as the graph $\mathbb{AG}_N(R)$ whose
vertex set is $\{I:\ (0)\neq I\lhd R$ and there exists a non-trivial ideal $J$ such
that $IJ\subseteq {\rm Nil}(R)\}$ and two distinct vertices $I$ and $J$
are adjacent if and only if $IJ\subseteq {\rm Nil}(R)$.
Obviously, our definition is slightly
different from the one defined by Behboodi and Rakeei in
\cite{MBI} and it is easy to see that the usual
annihilating-ideal graph $\mathbb{AG}(R)$ is a subgraph of
$\mathbb{AG}_N(R)$. In \cite{nilgraph1}, some basic properties of nil-graph of ideals have been studied. In this
article, we continue the study of the nil-graph of ideals. In Section 2, the necessary and sufficient conditions, under which the nil-graph of a ring is complete or bipartite, are found. Section 3 is devoted to the studying of independent sets in nil-graph ideals. In Section 4, we classify all Artinian rings whose nil-graphs of ideals have genus at most one.

\vspace{5mm} \noindent{\bf\large 2. When Is the Nil-Graph of Ideals Complete or Bipartite?}\\

 In this section, we study conditions under which the nil-graph of ideals of a commutative ring is complete or complete bipartite. For instance, we show that if $R$ is a Noetherian ring, then $\mathbb{AG}_N(R)$ is a
complete graph if and only if either $R$ is  Artinian local or
$R\cong F_1\times F_2$, where $F_1$ and $F_2$ are fields. Also, it is proved that if $\mathbb{AG}_N(R)$ is bipartite, then
$\mathbb{AG}_N(R)$ is complete bipartite. Moreover, if $R$ is
non-reduced, then $\mathbb{AG}_N(R)$ is star and ${\rm Nil}(R)$ is the
unique minimal prime ideal of $R$.

 We start with the following theorem which can be viewed as a consequence of \cite[Theorem 5]{nilgraph1} (Here we prove it independently). Note that it is clear that if $R$ is a reduced ring, then  $\mathbb{AG}_N(R)\cong \mathbb{AG}(R)$.

\begin{thm}\label{artiniancomplete}
Let $R$ be a Noetherian ring. Then $\mathbb{AG}_N(R)$ is a
complete graph if and only if either $R$ is  Artinian local or
$R\cong F_1\times F_2$, where $F_1$ and $F_2$ are fields.
\end{thm}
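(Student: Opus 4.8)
The plan is to prove both implications directly.

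For the ($\Leftarrow$) direction, suppose first that $R$ is Artinian local with maximal ideal $\mathfrak{m}$. Since $R$ is Artinian local, $\mathfrak{m}$ is nilpotent, so ${\rm Nil}(R) = \mathfrak{m}$, and for \emph{any} proper nonzero ideal $I$ we have $I \subseteq \mathfrak{m}$, whence $I^2 \subseteq \mathfrak{m} = {\rm Nil}(R)$; thus every nonzero proper ideal is a vertex, and given two distinct vertices $I, J$ we get $IJ \subseteq \mathfrak{m} = {\rm Nil}(R)$, so they are adjacent and $\mathbb{AG}_N(R)$ is complete. Next suppose $R \cong F_1 \times F_2$ with $F_1, F_2$ fields; then $R$ is reduced, so by the remark preceding the theorem $\mathbb{AG}_N(R) \cong \mathbb{AG}(R)$. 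The only nonzero proper ideals of $F_1 \times F_2$ are $F_1 \times (0)$ and $(0) \times F_2$, their product is $(0) = {\rm Nil}(R)$, so $\mathbb{AG}_N(R) \cong K_2$, which is complete.

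For the ($\Rightarrow$) direction, assume $\mathbb{AG}_N(R)$ is complete. I would split into the reduced and non-reduced cases. If $R$ is reduced, then $\mathbb{AG}_N(R) \cong \mathbb{AG}(R)$, and the statement that $\mathbb{AG}(R)$ complete forces $R \cong F_1 \times F_2$ with $F_i$ fields (for a reduced Noetherian ring) is essentially \cite[Theorem 5]{nilgraph1}; to argue it independently, note that a reduced Noetherian ring has finitely many minimal primes $\mathfrak{p}_1, \dots, \mathfrak{p}_n$ with $\bigcap \mathfrak{p}_i = (0)$, and if $n \geq 3$ one can pick ideals yielding a vertex-pair with nonzero product (e.g. $\mathfrak{p}_1$ and $\mathfrak{p}_2$ are vertices since $\mathfrak{p}_1 \cap \mathfrak{p}_2 \cap \cdots$ kills each, but $\mathfrak{p}_1 \mathfrak{p}_2 \not\subseteq (0)$ when $n \geq 3$), contradicting completeness; if $n = 1$ then $R$ is a domain, excluded by hypothesis; so $n = 2$, $R$ embeds in $R/\mathfrak{p}_1 \times R/\mathfrak{p}_2$, and a short argument (using that $\mathfrak{p}_1 \mathfrak{p}_2 = (0)$ and that ideals inside $\mathfrak{p}_1, \mathfrak{p}_2$ must be comparable/trivial) forces $R \cong F_1 \times F_2$ with $F_i$ fields. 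If $R$ is non-reduced, then ${\rm Nil}(R) \neq (0)$ is itself a vertex (it is killed by any non-trivial ideal since it is nilpotent of bounded index in a Noetherian ring, so ${\rm Nil}(R)^k = (0) \subseteq {\rm Nil}(R)$), and completeness says ${\rm Nil}(R) \cdot I \subseteq {\rm Nil}(R)$ for all vertices $I$ — which is automatic — so I instead extract structure by showing every nonzero proper ideal is a vertex and any two multiply into ${\rm Nil}(R)$; localizing/decomposing a Noetherian ring with this property and using that $R$ has only one minimal prime forces $R$ to be Artinian local. Concretely, if $R$ had two distinct minimal primes $\mathfrak{p} \neq \mathfrak{q}$, pick a prime power decomposition and exhibit ideals $I \subseteq \mathfrak{p}$, $J \subseteq \mathfrak{q}$ with $IJ \not\subseteq {\rm Nil}(R)$; hence $R$ has a unique minimal prime $\mathfrak{p} = {\rm Nil}(R)$, and then $R$ being Noetherian with $\dim R = 0$ (shown by a similar non-comparability argument applied to primes strictly between) gives $R$ Artinian, and unique minimal prime plus Artinian local decomposition gives $R$ local.

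The main obstacle I anticipate is the non-reduced case of the forward direction: ruling out Artinian (or Noetherian) rings that are \emph{not} local, such as $R_1 \times R_2$ where one factor is non-reduced local — here one must produce a concrete pair of vertices whose product escapes ${\rm Nil}(R)$, e.g. $R_1 \times (0)$ and $(0) \times R_2$ have product $(0) \subseteq {\rm Nil}(R)$ so are adjacent, but taking $I = \mathfrak{m}_1 \times R_2$ and $J = R_1 \times (0)$ (both vertices when $\mathfrak{m}_1$ is nilpotent and $R_2 \neq 0$) gives $IJ = \mathfrak{m}_1 \times (0)$, which lies in ${\rm Nil}(R)$ only if $\mathfrak{m}_1 \subseteq {\rm Nil}(R_1)$ — true, so this pair doesn't work and one must choose the pair more carefully, e.g. using a nonzero idempotent-supported ideal against a nilpotent-supported one. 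Making this case analysis airtight — in particular correctly handling when $R$ is a product of a field and a non-reduced local ring versus two non-reduced local rings — is where the real care is needed; everything else reduces to standard primary-decomposition and minimal-prime arithmetic in Noetherian rings.
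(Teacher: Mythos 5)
Your backward direction and your reduced case are fine (the latter leaning, as the paper does, on the cited classification of rings with complete annihilating-ideal graph). The genuine gap is in the non-reduced case of the forward direction, and it is not just a matter of "care": the strategy you propose there cannot work as stated. You plan to rule out two distinct minimal primes $\mathfrak{p}\neq\mathfrak{q}$ by exhibiting ideals $I\subseteq\mathfrak{p}$ and $J\subseteq\mathfrak{q}$ with $IJ\not\subseteq {\rm Nil}(R)$. But any such pair satisfies $IJ\subseteq\mathfrak{p}\cap\mathfrak{q}$, and when $\mathfrak{p},\mathfrak{q}$ are the only minimal primes this intersection \emph{is} ${\rm Nil}(R)$, so every pair of that shape is adjacent. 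This is exactly why your concrete attempt with $\mathfrak{m}_1\times R_2$ and $R_1\times(0)$ failed, and no "idempotent-supported versus nilpotent-supported" choice will fix it: the correct non-adjacent witnesses live inside the \emph{same} minimal prime. In $R_1\times R_2$ with $R_1$ Artinian local possessing a non-trivial ideal $I$, take $I\times R_2$ and $(0)\times R_2$ (both contained in $\mathfrak{m}_1\times R_2$); their product is $(0)\times R_2\not\subseteq {\rm Nil}(R_1)\times{\rm Nil}(R_2)$. This is the pair the paper uses, and you never produce it.

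The paper also orders the argument differently, in a way you would do well to imitate: instead of attacking the minimal primes first, it uses completeness on the \emph{maximal} ideals. If $\mathfrak{m},\mathfrak{n},\mathfrak{p}$ are three distinct maximal ideals (all vertices, since in the non-reduced case every non-trivial ideal is adjacent to ${\rm Nil}(R)$), then $\mathfrak{m}\mathfrak{n}\subseteq{\rm Nil}(R)\subseteq\mathfrak{p}$ forces $\mathfrak{m}\subseteq\mathfrak{p}$ or $\mathfrak{n}\subseteq\mathfrak{p}$, a contradiction; so there are at most two maximal ideals, and a similar containment argument shows each maximal ideal is a minimal prime, whence $R$ is Artinian and decomposes as $R_1\times R_2$. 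Only then does one hunt for the non-adjacent pair above to force both factors to be fields. In the local non-reduced case the paper's observation that $\mathfrak{m}$ and $\mathfrak{m}^2$ are distinct vertices (Nakayama), so $\mathfrak{m}^3\subseteq{\rm Nil}(R)$ and $R$ is Artinian, is also cleaner than your dimension-zero argument, though yours would go through. As it stands, your proposal identifies the hard spot but supplies a plan for it that provably cannot succeed.
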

\begin{proof}
{First suppose that $\mathbb{AG}_N(R)$ is complete. If $R$ is
reduced, then by \cite[Theorem 2.7]{MBI}, $R\cong F_1\times F_2$.
Thus we can suppose that ${\rm Nil}(R)\neq (0)$. We continue the proof
in the following two cases:

Case 1. $R$ is a local ring with the unique maximal ideal
$\mathfrak{m}$. Since $R$ is non-reduced, by Nakayama's Lemma
(see \cite[Proposition 2.6]{ati}), $\mathfrak{m}$ and
$\mathfrak{m}^2$ are two distinct vertices of
$\mathbb{AG}_N(R)$. Thus $\mathfrak{m}^3\subseteq {\rm Nil}(R)$ and so
$R$ is an Artinian local ring.

Case 2. $R$ has at least two maximal ideals. First we show that
$R$ has exactly two maximal ideals. Suppose to the contrary,
$\mathfrak{m}, \mathfrak{n}$ and $\mathfrak{p}$ are three
distinct maximal ideals of $R$. Since $\mathbb{AG}_N(R)$ is
complete, we deduce that $\mathfrak{m} \mathfrak{n}\subseteq
{\rm Nil}(R)\subseteq \mathfrak{p}$, a contradiction. Thus $R$ has
exactly two maximal ideals, say $\mathfrak{m}$ and
$\mathfrak{p}$. Now, we claim that both $\mathfrak{m}$ and
$\mathfrak{p}$ are minimal prime ideals. Since $\mathfrak{m}$ and
$\mathfrak{p}$ are adjacent, we conclude one of the maximal
ideals, say $\mathfrak{p}$, is a minimal prime ideal of $R$. Now,
suppose to the contrary, $\mathfrak{m}$ properly contains a
minimal prime ideal $\mathfrak{q}$ of $R$. Since $\mathfrak{m}
\mathfrak{p}\subseteq \mathfrak{q}$, we get a contradiction. So
the claim is proved. Thus $R$ is Artinian. Hence by \cite[Theorem
8.7 ]{ati}, we have $R\cong R_1\times R_2$, where $R_1$ and $R_2$
are Artinian local rings. By contrary and with no loss of
generality, suppose that $R_1$ contains a non-trivial ideal, say
$I$. Then the vertices $I\times R_2$ and $(0)\times R_2$ are not
adjacent, a contradiction. Thus $R\cong F_1\times F_2$, where
$F_1$ and $F_2$ are fields.\\
Conversely, if $R\cong F_1\times F_2$, where $F_1$ and $F_2$ are
fields, then it is clear that $\mathbb{AG}_N(R)\cong K_2$. Now,
suppose that $(R,\mathfrak{m})$ is an Artinian local ring. Since
$\mathfrak{m}$ is nilpotent, it follows that $\mathbb{AG}_N(R)$
is complete.}
\end{proof}

The following example shows that Theorem \ref{artiniancomplete}
does not hold for non-Noetherian rings.
\begin{example}
Let $R=\frac{k[x_i:\, i\geq 1]}{(x_i^2:\, i\geq 1)}$, where $k$
is a field. Then $R$ is not Artinian and $\mathbb{AG}_N(R)$ is a
complete graph.
\end{example}

\begin{remark}\label{niladjacent}
Let $R$ be a ring. Every non-trivial ideal contained in ${\rm Nil}(R)$ is adjacent to
every other vertex of $\mathbb{AG}_N(R)$. In particular, if $R$ is an Artinian
local ring, then $\mathbb{AG}_N(R)$ is a complete graph.
\end{remark}

The next result shows that nil-graphs, whose every vertices have finite degrees, are finite graphs.
\begin{thm}\label{finitedegree}
If every vertex of $\mathbb{AG}_N(R)$ has a finite degree, then $R$ has finitely many ideals.
\end{thm}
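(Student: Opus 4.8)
The plan is to treat the non-reduced and reduced cases separately. Suppose first that $R$ is non-reduced, so that ${\rm Nil}(R)$ is a non-trivial ideal. Since ${\rm Nil}(R)\cdot{\rm Nil}(R)\subseteq{\rm Nil}(R)$, the ideal ${\rm Nil}(R)$ is a vertex of $\mathbb{AG}_N(R)$, and by Remark \ref{niladjacent} it is adjacent to every other vertex; as its degree is finite, $\mathbb{AG}_N(R)$ has only finitely many vertices. Moreover $I\cdot{\rm Nil}(R)\subseteq{\rm Nil}(R)$ for every non-trivial ideal $I$, so every non-trivial ideal is a vertex; hence $V(\mathbb{AG}_N(R))=\mathbb{I}(R)^*$ is finite and $R$ has finitely many ideals.

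Now suppose $R$ is reduced, so $\mathbb{AG}_N(R)\cong\mathbb{AG}(R)$ and two vertices $I,J$ are adjacent precisely when $IJ=(0)$. Since $R$ is not a domain it has a nonzero zero-divisor $a$, so $(a)$ is a vertex and $Ann(a)\neq(0)$. Every nonzero ideal $J$ with $J\subseteq Ann(a)$ and $J\neq(a)$ satisfies $J(a)=(0)$, hence is adjacent to $(a)$; since $d((a))$ is finite, $Ann(a)$ contains only finitely many ideals of $R$ and so is an Artinian $R$-module. Thus $Ann(a)$ contains a minimal nonzero ideal $K$ of $R$, which is then a simple $R$-module with $K^2\subseteq K$. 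As $R$ is reduced, $K^2=(0)$ is impossible (any nonzero element of $K$ would be nilpotent), so by simplicity $K^2=K$; writing $K=(k)$ we get $k=rk^2$ for some $r\in R$, and $e:=rk$ is then an idempotent with $Re=K$. Therefore $R\cong Re\times R(1-e)=F\times R'$, where $F=Re$ has no proper nonzero ideal (being a simple $R$-module with identity) and so is a field, and $R'=R(1-e)$ is a nonzero ring (otherwise $e=1$ and $R=K$ would be a field).

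It remains to bound the number of ideals of $R'$, and here I would use the vertex $v:=F\times(0)$. An ideal $I\times J$ is adjacent to $v$ if and only if $(FI)\times(0)=(0)$, i.e. $I=(0)$; and conversely $(0)\times J$ is a proper nonzero vertex adjacent to $v$ for every nonzero ideal $J$ of $R'$. Hence $d(v)$ equals the number of nonzero ideals of $R'$, which is finite by hypothesis, so $R'$ has finitely many ideals and therefore so does $R\cong F\times R'$. I expect the reduced case — in particular, squeezing the decomposition $R\cong F\times R'$ out of the single hypothesis that $d((a))<\infty$, via the Artinianness of $Ann(a)$ and the idempotent extracted from a minimal ideal — to be the main point; checking that the various ideals invoked ($(a)$, $K$, $v$, and the $(0)\times J$) are genuine vertices, and that $R'\neq(0)$, is routine.
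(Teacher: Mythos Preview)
Your argument is correct. The non-reduced case matches the paper's. In the reduced case, however, you and the paper diverge after the common first step of showing that $Ann(a)$ is an Artinian $R$-module. The paper observes, by the symmetric argument, that $Ra$ is also Artinian (every nonzero ideal contained in $Ra$ annihilates $Ann(a)$, so is adjacent to the vertex $Ann(a)$), and then the short exact sequence $0\to Ann(a)\to R\to Ra\to 0$ forces $R$ itself to be Artinian; since $R$ is reduced, the structure theorem gives $R$ as a finite product of fields. You instead mine $Ann(a)$ for a minimal ideal, use reducedness to rule out $K^2=(0)$, lift an idempotent, split off a field factor $F$, and then read the number of ideals of the complementary factor $R'$ directly off the degree of $F\times(0)$. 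Your route is more constructive and avoids appealing to the structure theorem for reduced Artinian rings, at the cost of the idempotent-extraction step; the paper's route is shorter and yields the stronger conclusion that $R$ is Artinian (indeed semisimple) rather than merely that it has finitely many ideals.
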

\begin{proof}
{First suppose that $R$ is non-reduced. Since $d({\rm Nil}(R))<\infty$, the assertion follows from Remark \ref{niladjacent}. Thus we can assume that $R$ is reduced. Choose $0\neq x\in Z(R)$. Since $d(Rx)<\infty$ and $Rx$ is adjacent to every ideal contained in $Ann(x)$, we deduce that $Ann(x)$ is an Artinian $R$-module. Similarly, one can show that $Rx$ is an Artinian $R$-module. Now, the $R$-isomorphism $Rx\cong \frac{R}{Ann(x)}$ implies that $R$ is an Artinian ring. Now, since $R$ is reduced, \cite[Theorem 8.7]{ati} implies that $R$ is a direct product of finitely many fields and hence we are done.~}
\end{proof}

The next result gives another condition under which $\mathbb{AG}_N(R)$ is complete.
\begin{thm}\label{regular}
If $\mathbb{AG}_N(R)$ is an $r$-regular graph, then $\mathbb{AG}_N(R)$ is a complete graph.
\end{thm}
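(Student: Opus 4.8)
The plan is to first cut $R$ down to the Artinian case, and then split according to whether or not $R$ is reduced.

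Since $\mathbb{AG}_N(R)$ is $r$-regular, each of its vertices has the finite degree $r$, so Theorem \ref{finitedegree} applies and $R$ has only finitely many ideals; in particular $R$ is Artinian and $\mathbb{AG}_N(R)$ is a finite graph, say on $n$ vertices.

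Suppose first that $R$ is non-reduced, i.e.\ ${\rm Nil}(R)\neq(0)$. Since $1\notin {\rm Nil}(R)$, the ideal ${\rm Nil}(R)$ is non-trivial; and because ${\rm Nil}(R)\cdot {\rm Nil}(R)\subseteq {\rm Nil}(R)$, taking $J={\rm Nil}(R)$ in the definition of the vertex set shows that ${\rm Nil}(R)$ is a vertex of $\mathbb{AG}_N(R)$. By Remark \ref{niladjacent}, ${\rm Nil}(R)$ is adjacent to every other vertex, so $d({\rm Nil}(R))=n-1$; $r$-regularity then forces $r=n-1$, which means every vertex is adjacent to all the others. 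Hence $\mathbb{AG}_N(R)\cong K_n$ is complete.

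Now suppose $R$ is reduced, so that $\mathbb{AG}_N(R)\cong\mathbb{AG}(R)$. Being a reduced Artinian non-domain, $R\cong F_1\times\cdots\times F_m$ with each $F_i$ a field and $m\geq 2$, by \cite[Theorem 8.7]{ati}. Assume for contradiction that $m\geq 3$. The ideal $B=F_1\times\cdots\times F_{m-1}\times(0)$ is a vertex, and the only non-zero ideal annihilating it is the minimal ideal $(0)\times\cdots\times(0)\times F_m$; hence $d(B)=1$. On the other hand, a vertex $J$ is adjacent to the minimal ideal $A=F_1\times(0)\times\cdots\times(0)$ exactly when the first coordinate of $J$ is $(0)$, and there are $2^{m-1}-1\geq 3$ such vertices, so $d(A)\geq 3$. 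This contradicts $r$-regularity, so $m=2$; then $R\cong F_1\times F_2$ and $\mathbb{AG}_N(R)\cong K_2$ is complete.

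The only step that needs an idea is the reduced case: when $m\geq 3$ one must produce vertices of distinct degree, and the natural candidates are a minimal ideal (annihilated by exponentially many ideals) and a co-atom of the ideal lattice (annihilated by a single minimal ideal); for $m=2$ these two collapse into each other and the graph is just $K_2$. Everything else is bookkeeping with the structure theorem for Artinian rings together with the already-available Theorem \ref{finitedegree} and Remark \ref{niladjacent}.
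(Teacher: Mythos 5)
Your proof is correct and follows essentially the same route as the paper: reduce to finitely many ideals via Theorem \ref{finitedegree}, dispose of the non-reduced case with Remark \ref{niladjacent}, and in the reduced case write $R\cong F_1\times\cdots\times F_m$ and compare the degrees of exactly the two vertices the paper uses, $F_1\times(0)\times\cdots\times(0)$ (degree $2^{m-1}-1$) and $F_1\times\cdots\times F_{m-1}\times(0)$ (degree $1$). The only difference is that you spell out the non-reduced case, which the paper dismisses with ``there is nothing to prove.''
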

\begin{proof}
{If ${\rm Nil}(R)\neq (0)$, then by Remark \ref{niladjacent}, there is nothing to prove. So, suppose that $R$ is reduced. Since $\mathbb{AG}_N(R)$ is an $r$-regular graph, Theorem \ref{finitedegree} and \cite[Theorem 8.7]{ati} imply that $R\cong F_1\times\cdots\times F_n$, where $n\geq 2$ and each $F_i$ is a field. It is not hard to check that every ideal $I=I_1\times \cdots \times I_n$ of $R$ has degree $2^{n_I}-1$, where
$n_I=|\{i:\ 1\leq i\leq n\ {\rm and}\ I_i=(0)\}|$. Let $I=F_1\times (0)\times\cdots\times (0)$ and $J=F_1\times\cdots\times F_{n-1}\times(0)$. Then we have $d(I)=2^{n-1}-1$ and $d(J)=1$. The $r$-regularity of $\mathbb{AG}_N(R)$ implies that $2^{n-1}-1=1$ and so $n=2$. Therefore $R\cong F_1\times F_2$, as desired.}
\end{proof}

In the rest of this section,  we study bipartite nil-graphs of ideals of rings.
\begin{thm}\label{bipartite}
Let $R$ be a ring such that $\mathbb{AG}_N(R)$ is bipartite. Then
$\mathbb{AG}_N(R)$ is complete bipartite. Moreover, if $R$ is
non-reduced, then $\mathbb{AG}_N(R)$ is star and ${\rm Nil}(R)$ is the
unique minimal prime ideal of $R$.
\end{thm}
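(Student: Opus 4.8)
The strategy is to exploit that a bipartite graph contains no triangle, treating the non-reduced and reduced cases separately. Suppose first $R$ is non-reduced. Then ${\rm Nil}(R)$ is a non-trivial ideal, hence a vertex of $\mathbb{AG}_N(R)$ (use $J={\rm Nil}(R)$ in the defining condition, since ${\rm Nil}(R)^{2}\subseteq{\rm Nil}(R)$), and it is adjacent to every other vertex directly from the definition of adjacency (or by Remark \ref{niladjacent}). Thus if two vertices both different from ${\rm Nil}(R)$ were adjacent, we would obtain a triangle through ${\rm Nil}(R)$; so no such edge exists, i.e. $\mathbb{AG}_N(R)$ is a star with centre ${\rm Nil}(R)$, in particular complete bipartite. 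For the minimal-prime assertion I would argue by contradiction: suppose ${\rm Nil}(R)$ is not prime and pick $a,b\in R\setminus{\rm Nil}(R)$ with $ab\in{\rm Nil}(R)$. Then $a,b$ are non-units, and the ideals $I:=Ra+{\rm Nil}(R)$ and $J:=Rb+{\rm Nil}(R)$ are proper (if $1=ra+n$ with $n\in{\rm Nil}(R)$, then $ra=1-n$ is a unit, forcing $a$ to be a unit), they properly contain ${\rm Nil}(R)$, and $IJ\subseteq (ab)+{\rm Nil}(R)={\rm Nil}(R)$. Hence $I$ and $J$ are vertices different from ${\rm Nil}(R)$ that are adjacent to each other; they are distinct, because $I=J$ would give $I^{2}\subseteq{\rm Nil}(R)$ and so $I\subseteq{\rm Nil}(R)$. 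This triangle contradicts the star structure, so ${\rm Nil}(R)$ is prime, and being $\bigcap{\rm Min}(R)$ it is the unique minimal prime ideal.

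Now suppose $R$ is reduced, so $\mathbb{AG}_N(R)\cong\mathbb{AG}(R)$ and ${\rm Nil}(R)=(0)$. Since $R$ is not a domain it has at least two minimal primes, and I claim it has exactly two. Granting for the moment that $R$ has only finitely many minimal primes $\mathfrak p_1,\dots,\mathfrak p_n$, suppose $n\geq 3$ and put $\mathfrak b_i=\prod_{j\neq i}\mathfrak p_j$. Each $\mathfrak b_i$ is nonzero — otherwise $\prod_{j\neq i}\mathfrak p_j\subseteq\mathfrak p_i$ would put some $\mathfrak p_j$ ($j\neq i$) inside $\mathfrak p_i$ — and is proper since it lies in $\mathfrak p_k$ for every $k\neq i$; the $\mathfrak b_i$ are pairwise distinct because $\mathfrak b_i\subseteq\mathfrak p_j$ while $\mathfrak b_j\not\subseteq\mathfrak p_j$; and for $i\neq j$ we have $\mathfrak b_i\mathfrak b_j\subseteq\prod_{k}\mathfrak p_k\subseteq\bigcap_k\mathfrak p_k={\rm Nil}(R)=(0)$. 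So $\mathfrak b_1,\mathfrak b_2,\mathfrak b_3$ are three distinct, pairwise adjacent vertices of $\mathbb{AG}(R)$, a contradiction. Hence $R$ has exactly two minimal primes $\mathfrak p_1,\mathfrak p_2$, with $\mathfrak p_1\cap\mathfrak p_2=(0)$.

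Two minimal primes already force complete bipartiteness. For a vertex $I$, pick $0\neq t\in Ann(I)$; since ${\rm Nil}(R)=\bigcap{\rm Min}(R)=(0)$, $t$ avoids some minimal prime $\mathfrak p_\ell$, and then $tI=(0)\subseteq\mathfrak p_\ell$ yields $I\subseteq\mathfrak p_\ell$. So each vertex lies in exactly one of $\mathfrak p_1,\mathfrak p_2$ (not both, as $\mathfrak p_1\cap\mathfrak p_2=(0)$ and vertices are nonzero); writing $V_s$ for the vertices inside $\mathfrak p_s$, these partition the vertex set, are non-empty ($\mathfrak p_1,\mathfrak p_2$ are vertices via $\mathfrak p_1\mathfrak p_2=(0)$), have no internal edge (if $I,I'\subseteq\mathfrak p_1$ and $II'=(0)$, then $II'\subseteq\mathfrak p_2$ forces $I$ or $I'$ into $\mathfrak p_2$, hence to be $(0)$), and all cross pairs are edges since $IJ\subseteq\mathfrak p_1\cap\mathfrak p_2=(0)$ for $I\in V_1,\ J\in V_2$. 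Hence $\mathbb{AG}_N(R)\cong\mathbb{AG}(R)$ is complete bipartite. The step I expect to be the real obstacle is the reduction used above: proving, without a Noetherian hypothesis, that bipartiteness of $\mathbb{AG}(R)$ forces finitely many (hence exactly two) minimal primes. An infinite family of minimal primes resists the product trick, so one would either have to extract a finite triangle directly from the connectivity/diameter structure of $\mathbb{AG}(R)$ or invoke the known classification of reduced rings with bipartite annihilating-ideal graph; everything else is a routine verification.
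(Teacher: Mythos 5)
Your non-reduced half is complete and correct, and for the ``unique minimal prime'' claim it takes a genuinely different route from the paper. You prove directly that ${\rm Nil}(R)$ is a prime ideal: from $ab\in{\rm Nil}(R)$ with $a,b\notin{\rm Nil}(R)$ you build the triangle $\{{\rm Nil}(R),\,Ra+{\rm Nil}(R),\,Rb+{\rm Nil}(R)\}$, and all the verifications (properness via $1-n$ being a unit, distinctness via $I^2\subseteq{\rm Nil}(R)\Rightarrow I\subseteq{\rm Nil}(R)$) check out. The paper instead shows $R$ has a unique minimal prime by a localization argument: given two minimal primes it considers the multiplicative sets $S_1=R\setminus\mathfrak{p}_1$ and $S_2=\{1,z,z^2,\ldots\}$, deduces $0\in S_1S_2$, extracts $y z^k=0$, and exhibits the triangle $(x),(y),(z^k)$ where $(x)={\rm Nil}(R)$. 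Your version is more elementary (no appeal to the prime-avoidance result \cite[Theorem 3.44]{sharp}) and arguably cleaner, since it isolates the purely ring-theoretic statement ``$\mathbb{AG}_N(R)$ triangle-free and $R$ non-reduced $\Rightarrow$ ${\rm Nil}(R)$ prime.''

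The reduced half, however, has the gap you yourself flagged, and it is a real one: your product trick $\mathfrak b_i=\prod_{j\neq i}\mathfrak p_j$ only produces a triangle once you know ${\rm Min}(R)$ is finite (so that $\bigcap_k\mathfrak p_k=(0)$ ranges over \emph{all} minimal primes), and nothing in your argument derives that finiteness from bipartiteness. The natural repair via Lemma \ref{matlis} (localize at $S=R\setminus(\mathfrak p_1\cup\mathfrak p_2\cup\mathfrak p_3)$ to get a product of three fields and pull a triangle back) is delicate because $S$ may contain zero-divisors, so $R\to R_S$ need not be injective and the pulled-back ideals need not be nonzero or distinct without extra care. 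The paper does not fill this in either: its entire treatment of the reduced case is the citation of \cite[Corollary 2.5]{MBII} (the classification of reduced rings with bipartite/triangle-free annihilating-ideal graph), together with the observation that $\mathbb{AG}_N(R)\cong\mathbb{AG}(R)$ for reduced $R$. That is exactly the second escape route you named, so your proposal closes completely once you commit to that citation; as written, the reduced case is not self-contained. Your subsequent derivation of complete bipartiteness from the existence of exactly two minimal primes ($V_s=\{I:\ I\subseteq\mathfrak p_s\}$, no internal edges, all cross edges) is correct and is in fact more informative than the bare citation.
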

\begin{proof}
{If $R$ is reduced, then by \cite[Corollary 2.5]{MBII},
$\mathbb{AG}_N(R)$ is a complete bipartite graph. Now, suppose
that $R$ is non-reduced. Then by Remark \ref{niladjacent},
$\mathbb{AG}_N(R)$ is a star graph. So, by Remark
\ref{niladjacent}, either ${\rm Nil}(R)$ is a minimal ideal or $R$ has
exactly two ideals. In the latter case, $R$ is an Artinian local
ring and so ${\rm Nil}(R)$ is the unique minimal prime ideal of $R$.
Thus we can assume that ${\rm Nil}(R)=(x)$ is a minimal ideal of $R$,
for some $x\in R$. To complete the proof, we show that $R$ has
exactly one minimal prime ideal. Suppose to the contrary,
$\mathfrak{p}_1$ and $\mathfrak{p}_2$ are two distinct minimal
prime ideals of $R$. Choose $z\in \mathfrak{p}_1\setminus
\mathfrak{p}_2$ and set $S_1=R\setminus \mathfrak{p}_1$ and
$S_2=\{1,z,z^2,\ldots\}$. If $0\notin S_1S_2$, then by
\cite[Theorem 3.44]{sharp}, there exists a prime ideal
$\mathfrak{p}$ in $R$ such that $\mathfrak{p}\cap
S_1S_2=\varnothing$ and hence $\mathfrak{p}= \mathfrak{p}_1$, a
contradiction. So, $0\in S_1S_2$. Therefore, there exist positive
integer $k$ and $y\in R\setminus \mathfrak{p}_1$ such that
$yz^k=0$. Consider the ideals $(x),
(y)$ and $(z^k)$. This is clear that $(x),(y)$ and $(z^k)$ are
three distinct vertices which form a triangle in
$\mathbb{AG}_N(R)$, a contradiction.}
\end{proof}

The following corollary is an immediate consequence of Theorem \ref{bipartite} and Remark \ref{niladjacent}.
\begin{cor}
If $\mathbb{AG}_N(R)$ is a tree, then $\mathbb{AG}_N(R)$ is a star
graph.
\end{cor}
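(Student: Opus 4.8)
The plan is to reduce everything to Theorem \ref{bipartite}. Recall that a tree is by definition a connected acyclic graph, so in particular it contains no cycle of odd length and is therefore bipartite. Hence, once we assume that $\mathbb{AG}_N(R)$ is a tree, Theorem \ref{bipartite} immediately upgrades this to the statement that $\mathbb{AG}_N(R)$ is complete bipartite. The only remaining task is to determine which complete bipartite graphs are trees, and this is where Remark \ref{niladjacent} is convenient for cleaning up the degenerate situations.

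So the key step is the following short observation. Write the bipartition of the connected graph $\mathbb{AG}_N(R)$ as $U\cup V$. If both $|U|\geq 2$ and $|V|\geq 2$, choose $u_1,u_2\in U$ and $v_1,v_2\in V$; since the graph is complete bipartite, $u_1,v_1,u_2,v_2,u_1$ is a $4$-cycle, contradicting the hypothesis that $\mathbb{AG}_N(R)$ is acyclic. Therefore $\min\{|U|,|V|\}=1$, which is exactly the statement that $\mathbb{AG}_N(R)$ is a star graph. (If one of the parts were empty the graph would have at most one vertex and hence be a star trivially; but a tree possessing an edge cannot have an empty part, so in all cases the conclusion holds.)

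In the non-reduced case one does not even need this argument: Theorem \ref{bipartite} already asserts that a bipartite $\mathbb{AG}_N(R)$ over a non-reduced ring is a star, and Remark \ref{niladjacent} handles the Artinian local subcase, where $\mathbb{AG}_N(R)$ is complete, so that being a tree forces it to have at most two vertices and hence to be a star. I do not expect any genuine obstacle here, since all the substantive content is already carried out in Theorem \ref{bipartite}; the only point that requires a little care is to spell out the small degenerate cases (a single vertex, or an empty part) explicitly, so that the stated conclusion — being a star graph — is literally satisfied rather than merely asserted.
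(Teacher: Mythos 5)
Your argument is correct and is exactly the route the paper intends: the paper gives no written proof, simply declaring the corollary an immediate consequence of Theorem \ref{bipartite} and Remark \ref{niladjacent}, and your deduction (tree $\Rightarrow$ bipartite $\Rightarrow$ complete bipartite by Theorem \ref{bipartite}, and a complete bipartite graph with both parts of size at least two contains a $4$-cycle) is the standard way to fill in that gap.
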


We finish this section with the next corollary.
\begin{cor}
Let $R$ be an Artinian ring. Then $\mathbb{AG}_N(R)$ is bipartite if and only if
$\mathbb{AG}_N(R)\cong K_n$, where $n\in\{1,2\}$.
\end{cor}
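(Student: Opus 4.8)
The plan is to treat the two implications separately, using Theorem~\ref{bipartite} and Remark~\ref{niladjacent} to carry the forward direction and a direct triangle argument for the reduced case. The backward implication is immediate: $K_1$ and $K_2$ contain no cycle at all, hence no odd cycle, so both are bipartite.

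For the forward implication, assume $R$ is Artinian and $\mathbb{AG}_N(R)$ is bipartite. I would first dispose of the non-reduced case. By Theorem~\ref{bipartite}, ${\rm Nil}(R)$ is the unique minimal prime ideal of $R$; but in an Artinian ring every prime ideal is simultaneously maximal and minimal, so $R$ has a unique prime ideal, i.e.\ $R$ is local. Then Remark~\ref{niladjacent} shows $\mathbb{AG}_N(R)$ is complete, and since $R$ is not a field its maximal ideal is a non-trivial ideal and hence a vertex, so $\mathbb{AG}_N(R)\cong K_n$ for some $n\ge 1$. As $K_n$ contains a triangle once $n\ge 3$, bipartiteness forces $n\in\{1,2\}$.

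It remains to handle the reduced case, where $\mathbb{AG}_N(R)\cong\mathbb{AG}(R)$. By \cite[Theorem~8.7]{ati} we may write $R\cong F_1\times\cdots\times F_n$ with each $F_i$ a field, and $n\ge 2$ since $R$ is a non-domain. If $n\ge 3$, the ideals $F_1\times(0)\times\cdots\times(0)$, $(0)\times F_2\times(0)\times\cdots\times(0)$ and $(0)\times(0)\times F_3\times(0)\times\cdots\times(0)$ are three distinct vertices of $\mathbb{AG}_N(R)$ whose pairwise products vanish, hence form a triangle, contradicting bipartiteness; thus $n=2$ and $\mathbb{AG}_N(R)\cong K_2$. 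Combining both cases gives $\mathbb{AG}_N(R)\cong K_n$ with $n\in\{1,2\}$, which together with the converse yields the equivalence. There is no serious obstacle left at this stage, since the structural work is already packaged inside Theorem~\ref{bipartite}; the only points needing care are not overlooking the single-vertex possibility $n=1$ (and confirming $\mathbb{AG}_N(R)$ is nonempty, which holds because $R$ is not a field), and recalling the elementary fact that an Artinian ring with a unique minimal prime is local.
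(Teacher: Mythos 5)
Your proof is correct, but it is organized differently from the paper's. The paper first invokes Theorem~\ref{bipartite} to get that $\mathbb{AG}_N(R)$ is complete bipartite and then splits on whether $R$ is local or not: in the local case Remark~\ref{niladjacent} gives completeness, and in the non-local case the structure theorem \cite[Theorem 8.7]{ati} gives $R\cong R_1\times\cdots\times R_n$, the absence of odd cycles forces $n=2$, and a triangle of the form $R_1\times(0)$, $I\times(0)$, $(0)\times R_2$ forces each factor to be a field. You instead split on reduced versus non-reduced. In the non-reduced case you use the stronger part of Theorem~\ref{bipartite} (that ${\rm Nil}(R)$ is the unique minimal prime) together with the fact that in an Artinian ring every prime is both maximal and minimal to conclude $R$ is local, and then finish exactly as the paper does via Remark~\ref{niladjacent}. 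In the reduced case your route is slightly more economical: Artinian plus reduced immediately gives a product of fields, so you only need the triangle argument to rule out three or more factors, and you never have to separately prove that each local factor is a field. Both arguments rest on the same ingredients (Theorem~\ref{bipartite}, Remark~\ref{niladjacent}, the Artinian structure theorem, and a triangle obstruction), so the difference is one of bookkeeping rather than substance; your version has the small advantage of exploiting the ``unique minimal prime'' conclusion of Theorem~\ref{bipartite}, which the paper proves but does not reuse here.
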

\begin{proof}
{Let $R$ be an Artinian ring and $\mathbb{AG}_N(R)$ be bipartite. Then by Theorem \ref{bipartite}, $\mathbb{AG}_N(R)$ is complete bipartite.  If $R$ is local, then Remark \ref{niladjacent} implies that $\mathbb{AG}_N(R)$ is complete. Since $\mathbb{AG}_N(R)$ is complete bipartite, we deduce that $\mathbb{AG}_N(R)\cong K_n$, where $n\in\{1,2\}$. Now, suppose that $R$ is not local. Then by \cite[Theorem 8.7]{ati}, there exists a positive integer $n$ such that $R\cong R_1 \times\cdots\times R_n$, where every $R_i$ is an Artinian local ring. Since $\mathbb{AG}_N(R)$ contains no odd cycle, it follows that $n=2$. To complete the proof, we show that both $R_1$ and $R_2$ are fields. By contrary and with no loss of generality, suppose that $R_1$ contains a non-trivial ideal, say $I$. Then it is not hard to check that $R_1\times (0), I\times (0)$ and $(0)\times R_2$ forms a triangle in $\mathbb{AG}_N(R)$, a contradiction. The converse is trivial.}
\end{proof}


\vspace{5mm} \noindent{\bf\large 3. The Independence Number of Nil-Graphs of Ideals}\\

 In this section, we use the maximal intersecting families to obtain a low bound for the independence number of nil-graphs of ideals.
Let 
$R\cong R_1\times R_2\times\cdots\times R_n$,
 $$\mathcal{T}(R)=\{(0)\neq I= I_1\times I_2\times\cdots\times I_n\vartriangleleft R|\ \forall\ 1\leq k\leq n:\  I_k\in\{(0), R_k\}\};$$
and denote the induced subgraph of $\mathbb{AG}_N(R)$ on $\mathcal{T}(R)$ by $G_{\mathcal{T}}(R)$.
\begin{prop}\label{alphaGS}
If $R\cong R_1\times R_2\times\cdots\times R_n$ is a ring, then $\alpha(G_{\mathcal{T}}(R))=2^{n-1}$.
\end{prop}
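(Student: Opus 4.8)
The plan is to recognise $G_{\mathcal{T}}(R)$, up to isomorphism, as a purely combinatorial graph on the nonempty subsets of $\{1,\dots,n\}$, and then to reduce the statement to the classical fact that a maximal intersecting family of subsets of an $n$-element set has exactly $2^{n-1}$ members. To set up the dictionary, I would assign to each vertex $I=I_1\times\cdots\times I_n\in\mathcal{T}(R)$ the index set $S_I=\{k:\ I_k=R_k\}$; since $I\neq(0)$ this set is nonempty, and conversely every nonempty $S\subseteq\{1,\dots,n\}$ equals $S_I$ for a unique $I\in\mathcal{T}(R)$, namely the ideal whose $k$th coordinate is $R_k$ for $k\in S$ and $(0)$ otherwise. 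In particular $|\mathcal{T}(R)|=2^n-1$, and $R$ itself is the element of $\mathcal{T}(R)$ with index set $\{1,\dots,n\}$.

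Next I would identify the edges. Using ${\rm Nil}(R)={\rm Nil}(R_1)\times\cdots\times{\rm Nil}(R_n)$, for $I,J\in\mathcal{T}(R)$ the $k$th coordinate of $IJ$ is $R_k$ when $k\in S_I\cap S_J$ and $(0)$ otherwise; since $1\in R_k\setminus{\rm Nil}(R_k)$ we have $R_k\not\subseteq{\rm Nil}(R_k)$, so $IJ\subseteq{\rm Nil}(R)$ if and only if $S_I\cap S_J=\varnothing$. Hence $I$ and $J$ are adjacent in $G_{\mathcal{T}}(R)$ exactly when $S_I$ and $S_J$ are disjoint; in particular $R$ is an isolated vertex, consistent with $\{1,\dots,n\}$ meeting every nonempty set. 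Therefore the independent sets of $G_{\mathcal{T}}(R)$ are precisely the images of the intersecting families of nonempty subsets of $\{1,\dots,n\}$, and $\alpha(G_{\mathcal{T}}(R))$ equals the largest size of such a family.

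It then remains to prove that this largest size is $2^{n-1}$. For the upper bound I would partition the $2^n$ subsets of $\{1,\dots,n\}$ into $2^{n-1}$ complementary pairs $\{A,\,\{1,\dots,n\}\setminus A\}$; an intersecting family contains at most one member of each pair, since the two are disjoint, and it never contains $\varnothing$, so it has at most $2^{n-1}$ members and $\alpha(G_{\mathcal{T}}(R))\le 2^{n-1}$. For the matching lower bound I would take the maximal intersecting family $\mathcal{F}=\{A\subseteq\{1,\dots,n\}:\ 1\in A\}$, which has $2^{n-1}$ members, all nonempty and pairwise meeting in the element $1$; the associated set of vertices of $G_{\mathcal{T}}(R)$ is then independent of size $2^{n-1}$, whence $\alpha(G_{\mathcal{T}}(R))\ge 2^{n-1}$ and equality follows.

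The argument is essentially routine, and I do not expect a genuine obstacle. The one point that really needs attention is the edge computation — in particular the observation $R_k\not\subseteq{\rm Nil}(R_k)$, which is exactly what makes adjacency in $G_{\mathcal{T}}(R)$ coincide with disjointness of the index sets rather than with some weaker containment; once this translation is in place, the remainder is the standard double counting for intersecting families, and the lower bound is just the remark that the full set $\{1,\dots,n\}$ (the vertex $R$) may be adjoined freely to a largest intersecting family.
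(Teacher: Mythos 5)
Your proof is correct and follows essentially the same route as the paper: both translate $G_{\mathcal{T}}(R)$ into the disjointness graph on nonempty subsets of $[n]$ via the index set $\Delta_I=\{k: I_k=R_k\}$, so that independent sets correspond to pairwise intersecting families. The only difference is that the paper then cites Meyerowitz's lemma on maximal intersecting families for the value $2^{n-1}$, whereas you prove that bound directly by the complementary-pairs argument and the family of all sets containing a fixed element; you are also more explicit than the paper about why adjacency amounts to disjointness (namely $R_k\not\subseteq{\rm Nil}(R_k)$).
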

\begin{proof}
{For every ideal $I= I_1\times I_2\times \cdots \times I_n$, let
$$\Delta_I=\{k|\ 1\leq k\leq n \ {\rm and}\ I_k=R_k\};$$
Then two distinct vertices $I$ and $J$ in $G_{\mathcal{T}}(R)$ are not adjacent if and only if  $\Delta_I\cap \Delta_J\neq\varnothing$. So, there is a one to one correspondence between the independent sets of $G_{\mathcal{T}}(R)$ and the set of
families of pairwise intersecting subsets of the set $[n]=\{1,2,\ldots, n\}$.
%
%
So,  \cite[Lemma 2.1]{maximalintersecting} completes the proof.
}
\end{proof}

Using \cite[Theorem 8.7]{ati}, we have the following immediate corollary.

\begin{cor}\label{artinianalpha}
Let $R$ be an Artinian with $n$ maximal ideals.  Then $\alpha(\mathbb{AG}_N(R))\geq 2^{n-1}$; moreover, the equality holds if and only if $R$ is reduced.
\end{cor}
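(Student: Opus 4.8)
The plan is to pass to the Artinian structure theorem and then pin down $\alpha(\mathbb{AG}_N(R))$ by comparison with the graph $G_{\mathcal{T}}(R)$ of Proposition~\ref{alphaGS}. By \cite[Theorem 8.7]{ati} write $R\cong R_1\times\cdots\times R_n$ with each $R_i$ an Artinian local ring, $n$ being the number of maximal ideals of $R$; if $n=1$ then $R$ is local, $\mathbb{AG}_N(R)$ is complete by Remark~\ref{niladjacent}, and $\alpha(\mathbb{AG}_N(R))=1=2^{n-1}$, so from now on assume $n\geq 2$. Since $G_{\mathcal{T}}(R)$ is an induced subgraph of $\mathbb{AG}_N(R)$, every independent set of $G_{\mathcal{T}}(R)$ is one of $\mathbb{AG}_N(R)$, so Proposition~\ref{alphaGS} gives at once $\alpha(\mathbb{AG}_N(R))\geq\alpha(G_{\mathcal{T}}(R))=2^{n-1}$. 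For the ``if'' part of the moreover-clause, suppose $R$ is reduced; then each $R_i$ is a field, every ideal of $R$ is a ``box'' $I_1\times\cdots\times I_n$ with $I_i\in\{(0),R_i\}$, and every such non-trivial ideal, having a zero coordinate, is annihilated by a non-trivial ideal and hence is a vertex. Thus the vertex set of $\mathbb{AG}_N(R)$ is exactly $\mathcal{T}(R)$, so $\mathbb{AG}_N(R)=G_{\mathcal{T}}(R)$ and $\alpha(\mathbb{AG}_N(R))=2^{n-1}$.

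The ``only if'' direction is the substantive one: assuming $R$ non-reduced, I must produce an independent set of size exceeding $2^{n-1}$. Fix $j$ with ${\rm Nil}(R_j)\neq(0)$; then $R_j$ is a non-reduced Artinian local ring, so its maximal ideal $\mathfrak{m}_j$ equals ${\rm Nil}(R_j)$ and is a non-trivial ideal of $R_j$, and ${\rm Nil}(R)=\mathfrak{m}_1\times\cdots\times\mathfrak{m}_n$. Choose any index $i\neq j$ and let $\mathcal{S}=\{I=I_1\times\cdots\times I_n\in\mathcal{T}(R):\,I_i=R_i\}$; this is the maximum independent set of $G_{\mathcal{T}}(R)$ corresponding, under the bijection in the proof of Proposition~\ref{alphaGS}, to the maximal intersecting family $\{A\subseteq[n]:\,i\in A\}$, and $|\mathcal{S}|=2^{n-1}$. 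I would then adjoin to $\mathcal{S}$ the maximal ideal $M_j:=R_1\times\cdots\times R_{j-1}\times\mathfrak{m}_j\times R_{j+1}\times\cdots\times R_n$ of $R$. Three verifications complete the argument: $M_j$ is a vertex of $\mathbb{AG}_N(R)$, since $M_j$ times the non-trivial ideal $(0)\times\cdots\times R_j\times\cdots\times(0)$ equals $(0)\times\cdots\times\mathfrak{m}_j\times\cdots\times(0)\subseteq{\rm Nil}(R)$; $M_j\notin\mathcal{S}$, indeed $M_j\notin\mathcal{T}(R)$, because its $j$-th coordinate is neither $(0)$ nor $R_j$; and $M_j$ is adjacent to no member of $\mathcal{S}$, because for $I\in\mathcal{S}$ the $i$-th coordinate of $M_jI$ is $R_i\cdot R_i=R_i$, which is not contained in $\mathfrak{m}_i$, so $M_jI\not\subseteq{\rm Nil}(R)$. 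Hence $\mathcal{S}\cup\{M_j\}$ is an independent set of cardinality $2^{n-1}+1$, and $\alpha(\mathbb{AG}_N(R))>2^{n-1}$.

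The main obstacle is precisely this coordination: the maximum independent set of $G_{\mathcal{T}}(R)$ has to be built on an index $i$ \emph{different} from a non-reduced index $j$, so that the single non-nilpotent coordinate $R_i$ simultaneously witnesses non-adjacency of the extra vertex $M_j$ to all of $\mathcal{S}$; if one tried instead to enlarge $\mathcal{S}$ along $j$ itself, then $M_jI$ would land inside ${\rm Nil}(R)$ for every $I\in\mathcal{S}$ and $M_j$ would be adjacent to the whole set. Everything else --- the structure theorem, the observation that each non-trivial box is a vertex, and the check that $M_j$ is a vertex --- is routine bookkeeping.
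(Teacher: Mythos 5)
Your treatment of the inequality is exactly the route the paper takes: the corollary is presented there as an immediate consequence of Proposition~\ref{alphaGS} via the structure theorem, and your observation that $G_{\mathcal{T}}(R)$ is an induced subgraph of $\mathbb{AG}_N(R)$, so that its independent sets persist, is the whole content of that step. Where you genuinely add something is the ``moreover'' clause, for which the paper supplies no argument at all: your identification of the vertex set with $\mathcal{T}(R)$ in the reduced case, and above all the construction of the extra vertex $M_j=R_1\times\cdots\times\mathfrak{m}_j\times\cdots\times R_n$ adjoined to a maximum independent set $\mathcal{S}$ built on a coordinate $i\neq j$, is the right idea, and the point you single out (that the common coordinate $R_i$ with $i\neq j$ simultaneously certifies non-adjacency of $M_j$ to all of $\mathcal{S}$) is indeed the crux.

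Two concrete gaps remain. First, your ``only if'' argument needs $n\geq 2$ to choose $i\neq j$, and the case $n=1$ is not just unaddressed but is a counterexample to the statement as written: for a non-reduced Artinian local ring such as $\mathbb{Z}_4$, Remark~\ref{niladjacent} makes $\mathbb{AG}_N(R)$ complete, so $\alpha=1=2^{n-1}$ while $R$ is not reduced. Your blanket ``from now on assume $n\geq 2$'' silently discards this case; you should flag that the equivalence can only hold for $n\geq 2$. Second, the count $2^{n-1}$ in Proposition~\ref{alphaGS} comes from maximal intersecting families of subsets of $[n]$, which necessarily contain $[n]$ itself, corresponding to the improper ideal $R$; but $R$ is never a vertex of $\mathbb{AG}_N(R)$ under the paper's conventions (for reduced $R$ no non-trivial $J$ satisfies $RJ=J\subseteq{\rm Nil}(R)=(0)$, and elsewhere the paper treats $\mathbb{AG}_N$ of an Artinian local ring with $k$ non-trivial ideals as $K_k$, not $K_{k+1}$). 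Consequently either $|\mathcal{S}|=2^{n-1}-1$ or $\mathcal{S}$ contains the non-vertex $R$; in both readings your independent set $\mathcal{S}\cup\{M_j\}$ falls one short of the claimed $2^{n-1}+1$ honest vertices, and likewise your equality $V(\mathbb{AG}_N(R))=\mathcal{T}(R)$ in the reduced case is off by the single element $R$. This off-by-one originates in the paper's Proposition~\ref{alphaGS} rather than in your argument, but since your strict inequality hinges on the exact cardinality, you need to resolve which convention is in force before the ``only if'' direction is complete.
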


\begin{lem}\label{matlis} {\rm\cite[Proposition 1.5]{matlis}}
Let $R$ be a ring and $\{{\mathfrak{p}}_1,\ldots,
{\mathfrak{p}}_n\}$ be a finite set of distinct minimal prime ideals
of $R$. Let $S=R\setminus \bigcup_{i=1}^{n}{\mathfrak{p}}_i$. Then
$R_S \cong R_{{\mathfrak{p}}_1}\times\cdots\times
R_{{\mathfrak{p}}_n}$.
\end{lem}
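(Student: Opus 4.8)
The plan is to pass to the localization $A := R_S$, show that $A$ has exactly the $n$ prime ideals $\mathfrak{p}_1A,\ldots,\mathfrak{p}_nA$, each of them simultaneously maximal and minimal, and then deduce that any such ring is the direct product of its localizations. For the first step I would use the standard description of $\mathrm{Spec}(R_S)$: its primes correspond to primes $\mathfrak{q}$ of $R$ with $\mathfrak{q}\cap S=\varnothing$, i.e. $\mathfrak{q}\subseteq\bigcup_{i=1}^{n}\mathfrak{p}_i$. By the prime avoidance lemma any such $\mathfrak{q}$ lies in some $\mathfrak{p}_i$, and minimality of $\mathfrak{p}_i$ forces $\mathfrak{q}=\mathfrak{p}_i$; conversely each $\mathfrak{p}_i$ misses $S$. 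Hence the primes of $A$ are exactly $\mathfrak{p}_1A,\ldots,\mathfrak{p}_nA$, they are pairwise distinct, and, coming from pairwise incomparable primes of $R$, each is both maximal and minimal in $A$. Moreover $A_{\mathfrak{p}_iA}\cong R_{\mathfrak{p}_i}$ by transitivity of localization.

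Next I would establish the abstract fact: if a commutative ring $A$ has exactly the finitely many pairwise distinct prime ideals $\mathfrak{m}_1,\ldots,\mathfrak{m}_n$, all maximal, then the canonical map $\phi\colon A\to\prod_{i=1}^{n}A_{\mathfrak{m}_i}$ is an isomorphism. The crux is that in each $A_{\mathfrak{m}_i}$ the unique maximal ideal coincides with the nilradical (the nilradical is the intersection of all primes, and here there is only one), so every element of $\mathfrak{m}_i$ is nilpotent in $A_{\mathfrak{m}_i}$. Therefore, for $i\neq j$, choosing $s\in\mathfrak{m}_i\setminus\mathfrak{m}_j$ makes $s$ at once nilpotent and invertible in $(A_{\mathfrak{m}_i})_{\mathfrak{m}_j}$, so $(A_{\mathfrak{m}_i})_{\mathfrak{m}_j}=0$; while $(A_{\mathfrak{m}_j})_{\mathfrak{m}_j}=A_{\mathfrak{m}_j}$. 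Thus $\phi$ becomes an isomorphism after localizing at each maximal ideal $\mathfrak{m}_j$ of $A$, and a homomorphism that is a local isomorphism at every maximal ideal is an isomorphism. Applying this to $A=R_S$ with $\mathfrak{m}_i=\mathfrak{p}_iA$ yields $R_S\cong\prod_{i=1}^{n}(R_S)_{\mathfrak{p}_iA}\cong\prod_{i=1}^{n}R_{\mathfrak{p}_i}$.

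I expect the product-decomposition step to be the main obstacle: since $R$ is not assumed Noetherian, one cannot simply invoke the structure theorem for Artinian rings, and the localization-at-each-maximal-ideal argument sketched above is what replaces it, requiring no finiteness beyond the finiteness of $\mathrm{Spec}(A)$. The remaining ingredients — prime avoidance, the spectrum of a localization, and transitivity of localization — are entirely routine.
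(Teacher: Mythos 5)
Your argument is correct. The paper itself gives no proof of this lemma --- it is quoted verbatim from Matlis \cite[Proposition 1.5]{matlis} --- so there is nothing to compare against, but your proof is complete and standard: prime avoidance (applicable here since all the ideals in the union are prime) pins down $\mathrm{Spec}(R_S)$ as exactly the $n$ pairwise incomparable primes $\mathfrak{p}_iR_S$; the observation that in a ring with a unique prime ideal that ideal is the nilradical makes $(A_{\mathfrak{m}_i})_{\mathfrak{m}_j}=0$ for $i\neq j$; and the local-to-global criterion for isomorphisms, together with transitivity of localization, finishes the decomposition without any Noetherian or Artinian hypothesis.
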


\begin{prop}\label{cormatlis}
If $|{\rm Min}(R)|\geq n$, then $\alpha(\mathbb{AG}_N(R))\geq 2^{n-1}$.
\end{prop}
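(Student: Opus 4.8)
The plan is to reduce to the already-established Proposition~\ref{alphaGS} by localizing at the minimal primes. Pick $n$ distinct minimal prime ideals $\mathfrak{p}_1,\ldots,\mathfrak{p}_n$ of $R$ and set $S=R\setminus\bigcup_{i=1}^{n}\mathfrak{p}_i$. By Lemma~\ref{matlis}, $R_S\cong R_{\mathfrak{p}_1}\times\cdots\times R_{\mathfrak{p}_n}$, a product of $n$ rings; call this product $T$. By Proposition~\ref{alphaGS} we get $\alpha(G_{\mathcal{T}}(T))=2^{n-1}$, so there is an independent set $\{A^{(1)},\ldots,A^{(m)}\}$ in $\mathbb{AG}_N(T)$ with $m=2^{n-1}$, each $A^{(j)}$ being a nonzero ideal of $T$ that is a product of $(0)$'s and full factors. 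The strategy is then to pull these ideals back through the localization map $\varphi\colon R\to R_S$ to produce an independent set of the same size in $\mathbb{AG}_N(R)$.

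Concretely, for each $j$ let $I_j=\varphi^{-1}(A^{(j)})$, the contraction of $A^{(j)}$ to $R$. First I would check that each $I_j$ is a genuine vertex of $\mathbb{AG}_N(R)$: it is a nonzero proper ideal (it is proper since $A^{(j)}$ is proper and contraction of a proper ideal along a ring map to a localization stays proper; it is nonzero because $A^{(j)}\neq(0)$ and $\varphi$ is a localization, so the contraction contains an element with nonzero image), and the annihilating partner of $A^{(j)}$ in $T$ contracts to a witness that $I_j$ multiplies into ${\rm Nil}(R)$ — here one uses that ${\rm Nil}(R_S)=\varphi({\rm Nil}(R))R_S$ and that $\varphi^{-1}({\rm Nil}(R_S))\supseteq{\rm Nil}(R)$, together with the fact that $\mathfrak{p}_1\cap\cdots\cap\mathfrak{p}_n$ is contained in every $\mathfrak{p}_i$ so its image is nilpotent-like enough in $T$. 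Second, and this is the crux, I would verify that $I_j$ and $I_k$ are non-adjacent in $\mathbb{AG}_N(R)$ whenever $A^{(j)}$ and $A^{(k)}$ are non-adjacent in $\mathbb{AG}_N(T)$: if $I_jI_k\subseteq{\rm Nil}(R)$ then applying $\varphi$ gives $A^{(j)}A^{(k)}=\varphi(I_j)\varphi(I_k)R_S\subseteq {\rm Nil}(R_S)$, contradicting non-adjacency in $T$ (using that for these special product-ideals $A^{(j)}A^{(k)}\subseteq{\rm Nil}(T)$ is equivalent to $\Delta_{A^{(j)}}\cap\Delta_{A^{(k)}}\neq\varnothing$, which forces a common full field-or-local factor on which the product is the whole factor, not nilpotent). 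Finally, the $I_j$ are pairwise distinct because their extensions recover the distinct $A^{(j)}$.

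The main obstacle I anticipate is the bookkeeping around nilpotents under localization and the need to make sure the contracted ideals $I_j$ are pairwise distinct and really non-adjacent — localization can identify ideals, so one must argue that the extension-contraction does not collapse the combinatorial configuration. The cleanest way to handle this is to observe that $A^{(j)}$ is recovered as $I_jR_S$ (extensions of contractions along localizations are the original ideal), so distinctness and the adjacency/non-adjacency pattern both transfer faithfully; the only real content is the equivalence, for the ideals in $\mathcal{T}(T)$, between the graph-adjacency condition $A^{(j)}A^{(k)}\subseteq{\rm Nil}(T)$ and the disjointness of the index sets $\Delta_{A^{(j)}}$, which is exactly what was used inside the proof of Proposition~\ref{alphaGS}. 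Once that is in hand, $\{I_1,\ldots,I_m\}$ is an independent set of size $2^{n-1}$ in $\mathbb{AG}_N(R)$, giving $\alpha(\mathbb{AG}_N(R))\geq 2^{n-1}$.
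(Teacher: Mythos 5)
Your route is the same as the paper's: choose $n$ minimal primes, localize at $S=R\setminus\bigcup_{i=1}^{n}\mathfrak{p}_i$, invoke Lemma~\ref{matlis} and Proposition~\ref{alphaGS} to produce an independent set of size $2^{n-1}$ in $\mathbb{AG}_N(R_S)$, and pull it back to $R$ by contraction. Your handling of distinctness (extensions of contractions recover the $A^{(j)}$) and of non-adjacency (if $I_jI_k\subseteq{\rm Nil}(R)$ then $(I_jI_k)_S=A^{(j)}A^{(k)}\subseteq{\rm Nil}(R)_S\subseteq{\rm Nil}(R_S)$) is correct, and is exactly the content behind the paper's ``it is not hard to check.''

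There is, however, a genuine gap at the step where you assert that each $I_j=\varphi^{-1}(A^{(j)})$ is a \emph{vertex} of $\mathbb{AG}_N(R)$, i.e.\ that some non-trivial ideal $J$ satisfies $I_jJ\subseteq{\rm Nil}(R)$. Your justification runs the needed inclusion backwards: you cite $\varphi^{-1}({\rm Nil}(R_S))\supseteq{\rm Nil}(R)$, but contracting the relation $A^{(j)}B\subseteq{\rm Nil}(R_S)$ only yields $I_j\varphi^{-1}(B)\subseteq\varphi^{-1}({\rm Nil}(R_S))=\mathfrak{p}_1\cap\cdots\cap\mathfrak{p}_n$, and this intersection may properly contain ${\rm Nil}(R)$ because the hypothesis is merely $|{\rm Min}(R)|\geq n$, so $R$ can have further minimal primes. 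The failure is not just cosmetic. Take $R$ to be the ring of eventually constant sequences in $k^{\mathbb{N}}$ (reduced, von Neumann regular, with minimal primes the ``position'' primes $\mathfrak{p}_i$ together with the prime $\mathfrak{p}_\infty$ of eventually zero sequences). Here $Ann(\mathfrak{p}_\infty)=(0)$, so $\mathfrak{p}_\infty$ is not a vertex of $\mathbb{AG}_N(R)$ at all; yet running your construction with $n=2$ and the chosen primes $\mathfrak{p}_1,\mathfrak{p}_\infty$ produces $\varphi^{-1}(R_{\mathfrak{p}_1}\times(0))=\ker(R\to R_{\mathfrak{p}_\infty})=\mathfrak{p}_\infty$ as one of your $I_j$. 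So the contracted family need not land in the vertex set, and an extra idea is required (a more careful choice of the $n$ minimal primes, or a lift of the independent set built from elements rather than from contractions). In fairness, the paper's own proof buries exactly this point in the phrase ``it is not hard to check that $I,J$ are two non-adjacent vertices of $\mathbb{AG}_N(R)$,'' so you have reproduced its strategy faithfully; but the step you tried to make explicit does not go through as written.
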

\begin{proof}
{Let $\{{\mathfrak{p}}_1,\ldots,
{\mathfrak{p}}_n\}$ be a subset of ${\rm Min}(R)$ and $S=R\setminus \bigcup_{i=1}^{n}{\mathfrak{p}}_i$. By Lemma \ref{matlis},
there exists a ring isomorphism $R_S\cong R_{{\mathfrak{p}}_1}\times\cdots\times
R_{{\mathfrak{p}}_n}$.
On the other hand, if $I_S,J_S$ are two non-adjacent vertices of $\mathbb{AG}_N(R_S)$, then it is not hard to check that  $I,J$ are two non-adjacent vertices of $\mathbb{AG}_N(R)$. Thus $\alpha(\mathbb{AG}_N(R))\geq \alpha(\mathbb{AG}_N(R_S))$ and so by Proposition \ref{alphaGS}, we deduce that $\alpha(\mathbb{AG}_N(R))\geq 2^{n-1}$.
}
\end{proof}

From the previous proposition, we have the following immediate corollary which shows that the finiteness of $\alpha(\mathbb{AG}_N(R))$ implies the finiteness of number of the minimal prime ideals of $R$.
\begin{cor}\label{infiniteminclique}
If $R$ contains infinitely many minimal prime ideals, then the independence number of $\mathbb{AG}_N(R)$ is infinite.
\end{cor}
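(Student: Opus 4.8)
The plan is to argue by contraposition: I would assume that $\alpha(\mathbb{AG}_N(R))$ is finite, say $\alpha(\mathbb{AG}_N(R)) = m < \infty$, and then deduce that $R$ has only finitely many minimal prime ideals. The engine of the argument is Proposition \ref{cormatlis}, which asserts that whenever $|{\rm Min}(R)| \geq n$, we have $\alpha(\mathbb{AG}_N(R)) \geq 2^{n-1}$. So the contrapositive essentially writes itself: if $R$ had infinitely many minimal primes, then for \emph{every} positive integer $n$ we could select an $n$-element subset of ${\rm Min}(R)$, and Proposition \ref{cormatlis} would give $\alpha(\mathbb{AG}_N(R)) \geq 2^{n-1}$; letting $n \to \infty$ forces $\alpha(\mathbb{AG}_N(R))$ to be infinite.

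Concretely, I would write it as follows. Suppose $R$ contains infinitely many minimal prime ideals. Fix an arbitrary positive integer $n$. Since ${\rm Min}(R)$ is infinite, it certainly contains at least $n$ elements, i.e.\ $|{\rm Min}(R)| \geq n$. By Proposition \ref{cormatlis}, $\alpha(\mathbb{AG}_N(R)) \geq 2^{n-1}$. As $n$ was arbitrary and $2^{n-1} \to \infty$ as $n \to \infty$, we conclude that $\alpha(\mathbb{AG}_N(R))$ cannot be finite; that is, the independence number of $\mathbb{AG}_N(R)$ is infinite.

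There is essentially no obstacle here — the corollary is a direct logical consequence of Proposition \ref{cormatlis}, and the only thing to be careful about is the (trivial) observation that an infinite set has subsets of every finite cardinality, so that Proposition \ref{cormatlis} genuinely applies for all $n$. The substantive content was already absorbed into Proposition \ref{cormatlis} via the Matlis localization isomorphism (Lemma \ref{matlis}) and the intersecting-families computation (Proposition \ref{alphaGS}); the corollary merely records the limiting behavior.
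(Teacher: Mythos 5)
Your argument is correct and is exactly the route the paper intends: the corollary is stated as an immediate consequence of Proposition \ref{cormatlis}, obtained by applying that proposition for every $n$ and noting that $2^{n-1}$ is unbounded. Nothing further is needed.
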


\begin{thm}\label{reducedfinitemin}
For every Noetherian reduced ring  $R$,  $\alpha(\mathbb{AG}_N(R))= 2^{|{\rm Min}(R)|-1}$.
\end{thm}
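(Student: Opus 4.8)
The plan is to prove the two inequalities $\alpha(\mathbb{AG}_N(R)) \geq 2^{|{\rm Min}(R)|-1}$ and $\alpha(\mathbb{AG}_N(R)) \leq 2^{|{\rm Min}(R)|-1}$ separately. For the lower bound, note that since $R$ is Noetherian, it has only finitely many minimal primes, say $|{\rm Min}(R)| = n$; then Proposition \ref{cormatlis} (applied with the full set ${\rm Min}(R)$) gives $\alpha(\mathbb{AG}_N(R)) \geq 2^{n-1}$ immediately. So the real content is the upper bound: every independent set in $\mathbb{AG}_N(R)$ has size at most $2^{n-1}$.

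For the upper bound, the key structural fact is that $R$ is reduced and Noetherian, so ${\rm Nil}(R) = (0)$ and $(0) = \mathfrak{p}_1 \cap \cdots \cap \mathfrak{p}_n$ is an irredundant primary (here, prime) decomposition of the zero ideal, with the $\mathfrak{p}_i$ the finitely many minimal primes. Moreover, since $R$ is reduced, $\mathbb{AG}_N(R) \cong \mathbb{AG}(R)$ as noted in the excerpt, so it suffices to bound the independence number of the annihilating-ideal graph. The plan is: given an independent set $\mathcal{S}$ of ideals (no two of which multiply into $(0)$ — equivalently, $Ann(I)Ann(J)$... rather, $IJ \neq (0)$ for distinct $I,J \in \mathcal{S}$), assign to each ideal $I \in \mathcal{S}$ the subset $\Delta_I = \{k : I \not\subseteq \mathfrak{p}_k\} \subseteq [n]$. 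The crucial claim is that this assignment sends independent sets to intersecting families: if $\Delta_I \cap \Delta_J = \varnothing$, then for every $k$ either $I \subseteq \mathfrak{p}_k$ or $J \subseteq \mathfrak{p}_k$, hence $IJ \subseteq \mathfrak{p}_k$ for all $k$, so $IJ \subseteq \bigcap_k \mathfrak{p}_k = {\rm Nil}(R) = (0)$, making $I$ and $J$ adjacent — contradiction. One must also check this map is injective on any independent set: if $\Delta_I = \Delta_J$ for $I \neq J$ in $\mathcal{S}$, derive a contradiction, e.g. by showing $I$ and $J$ would then both fail to be adjacent to some common witness, or more directly by noting that two ideals with the same $\Delta$ and both nonzero vertices cannot be non-adjacent to each other unless... — this is where care is needed, and one may instead argue that within a fixed value of $\Delta$, the corresponding ideals are pairwise adjacent (since if $\Delta_I = \Delta_J$ and $\Delta_I^c \neq \varnothing$ then $IJ \subseteq \bigcap_{k \in \Delta_I^c}\mathfrak{p}_k$, which need not be $(0)$ — so this needs the localization/Matlis structure again). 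Given injectivity into the family of pairwise-intersecting subsets of $[n]$, \cite[Lemma 2.1]{maximalintersecting} (used already in Proposition \ref{alphaGS}) bounds the size of such a family by $2^{n-1}$, completing the proof.

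The main obstacle I anticipate is precisely the injectivity step — showing that an independent set cannot contain two distinct ideals $I \neq J$ with $\Delta_I = \Delta_J$. The clean way around this is probably to observe that it suffices to find, among all independent sets, a \emph{maximum} one that is "saturated" in a suitable sense, or to reduce to the total quotient ring / localization $R_S$ at the complement of the union of all minimal primes, where by Lemma \ref{matlis} we get $R_S \cong \prod_{i=1}^n R_{\mathfrak{p}_i}$ with each $R_{\mathfrak{p}_i}$ a field (since $R$ is reduced, $R_{\mathfrak{p}_i}$ is a reduced local ring of dimension $0$, hence a field). Over this product of $n$ fields, Proposition \ref{alphaGS} already tells us $\alpha(G_{\mathcal{T}}(R_S)) = 2^{n-1}$, and an extension/contraction argument for ideals along $R \to R_S$ (as in the proof of Proposition \ref{cormatlis}, run in the reverse direction) should show that any independent set of $\mathbb{AG}_N(R)$ pushes forward to an independent set of $\mathbb{AG}_N(R_S)$ of the same size, with the pushforward being injective because contraction recovers the ideal up to radical and our ideals are... — again one leans on reducedness. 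So the overall strategy is: lower bound from Proposition \ref{cormatlis}; upper bound by localizing at the union of minimal primes to land in a product of $n$ fields, transporting independent sets there injectively, and invoking Proposition \ref{alphaGS}.
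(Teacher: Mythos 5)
Your route---lower bound from Proposition~\ref{cormatlis}, upper bound by passing to $R_S\cong R_{\mathfrak{p}_1}\times\cdots\times R_{\mathfrak{p}_n}$ (a product of fields by Lemma~\ref{matlis} and reducedness) and invoking Proposition~\ref{alphaGS}---is exactly the paper's. But the step you yourself flag as the main obstacle, namely the injectivity of $I\mapsto\Delta_I$ (equivalently of $I\mapsto I_S$) on an independent set, is a genuine gap, and neither of your proposed workarounds closes it: ``contraction recovers the ideal up to radical'' does not suffice, since distinct non-adjacent ideals can have the same radical, and, as you yourself observe, two ideals with $\Delta_I=\Delta_J$ need not be adjacent because $\bigcap_{k\notin\Delta_I}\mathfrak{p}_k$ need not be $(0)$. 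The paper's own proof has the identical defect: it checks that non-adjacent $I,J$ give non-adjacent $I_S,J_S$, but never verifies $I_S\neq J_S$, which is what the inequality $\alpha(\mathbb{AG}_N(R))\leq\alpha(\mathbb{AG}_N(R_S))$ actually requires.

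The gap cannot be repaired, because the statement is false for non-Artinian $R$. Take $R=k[x,y]/(xy)$, a reduced Noetherian ring with ${\rm Min}(R)=\{(x),(y)\}$, so the claimed value is $2^{2-1}=2$. Each ideal $(x^i)$, $i\geq 1$, is a vertex of $\mathbb{AG}_N(R)$ (it annihilates $(y)$), these ideals are pairwise distinct, and $(x^i)(x^j)=(x^{i+j})\neq(0)$, so $\{(x^i):i\geq 1\}$ is an infinite independent set; note that all of its members have the same $\Delta$ and the same extension to $R_S$, which is precisely the failure of injectivity. Hence $\alpha(\mathbb{AG}_N(R))$ is infinite. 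What survives in the general Noetherian reduced case is only the inequality $\alpha(\mathbb{AG}_N(R))\geq 2^{|{\rm Min}(R)|-1}$, i.e.\ your lower-bound half (which is correct and is the paper's argument as well); the stated equality holds under the stronger hypothesis that $R$ is Artinian reduced, where it is just Corollary~\ref{artinianalpha}.
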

\begin{proof}
{Let ${\rm Min}(R)=\{\mathfrak{p}_1,\mathfrak{p}_2,\ldots, \mathfrak{p}_n\}$ and $S=R\setminus \bigcup_{k=1}^n\mathfrak{p}_k$. Then Lemma \ref{matlis} implies that
$
R_S\cong R_{{\mathfrak{p}}_1}\times\cdots\times
R_{{\mathfrak{p}}_n}$. On the other hand, by using  \cite[Proposition 1.1]{matlis}, we deduce that every $R_{\mathfrak{p}_i}$ is a field. Thus  $\alpha(\mathbb{AG}_N(R))\geq \alpha(\mathbb{AG}_N(R_S))=2^{n-1}$, by Corollary \ref{artinianalpha}. To complete the proof, it is enough to show that $\alpha(\mathbb{AG}_N(R))\leq \alpha(\mathbb{AG}_N(R_S))$. To see this, let $I(x_1,x_2,\ldots, x_r)$ and $J=(y_1,y_2,\ldots, y_s)$ be two non-adjacent vertices of $\mathbb{AG}_N(R)$. By \cite[Corollary 2.4]{huc}, $S$ contains no zero-divisor and so $I_S, J_S$ are non-trivial ideals of $R_S$. We show that $I_S, J_S$ are non-adjacent vertices of $\mathbb{AG}_N(R_S)$. Suppose to the contrary, $I_SJ_S\subseteq {\rm Nil}(R)_S=(0)$. Then for every $1\leq i\leq r$ and $1\leq j\leq s$, there exists $s_{ij}\in S$ such that $s_{ij}x_iy_j=0$. Setting $t=\prod_{i,j} s_{ij}$, we have $tIJ=(0)$. Since $t$ is not a zero-divisor, we deduce that $IJ=(0)$, a contradiction. Therefore, $\alpha(\mathbb{AG}_N(R))\leq \alpha(\mathbb{AG}_N(R_S))$, as desired.
}
\end{proof}

Finally as an application of the nil-graph of ideals in the ring theory we have the following corollary which shows that number of minimal prime ideals of a Noetherian reduced ring coincides number of maximal ideals of the total ring of $R$.

\begin{cor}
Let $R$ be a Noetherian reduced ring. Then $$|{\rm Min}(R)|=|{\rm Max}(T(R))|=\log_2(\alpha(\mathbb{AG}_N(R))).$$
\end{cor}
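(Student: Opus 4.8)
The plan is to deduce the claimed triple equality from the results already established, chaining together Theorem~\ref{reducedfinitemin} with a standard characterization of the maximal ideals of the total quotient ring. First I would recall that, by definition, $T(R)=R_S$ where $S$ is the set of non-zero-divisors of $R$; for a Noetherian reduced ring the set of zero-divisors is precisely the union of the (finitely many) minimal primes, so $S=R\setminus\bigcup_{k=1}^{n}\mathfrak{p}_k$ with ${\rm Min}(R)=\{\mathfrak{p}_1,\ldots,\mathfrak{p}_n\}$. This is exactly the localization appearing in the proof of Theorem~\ref{reducedfinitemin}, so Lemma~\ref{matlis} gives $T(R)=R_S\cong R_{\mathfrak{p}_1}\times\cdots\times R_{\mathfrak{p}_n}$, and each $R_{\mathfrak{p}_i}$ is a field (as noted there, via \cite[Proposition 1.1]{matlis}). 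A finite product of $n$ fields has exactly $n$ maximal ideals, hence $|{\rm Max}(T(R))|=n=|{\rm Min}(R)|$, which gives the first equality.

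For the second equality, I would simply invoke Theorem~\ref{reducedfinitemin}: since $R$ is Noetherian reduced, $\alpha(\mathbb{AG}_N(R))=2^{|{\rm Min}(R)|-1}=2^{n-1}$. Taking $\log_2$ of both sides yields $\log_2(\alpha(\mathbb{AG}_N(R)))=n-1$. Here I should be careful about the off-by-one: the formula gives $n-1$, not $n$, so strictly the corollary as stated would require $|{\rm Min}(R)|=\log_2(\alpha(\mathbb{AG}_N(R)))+1$; I would either state it that way or, if the intended normalization is different, note that the displayed identity should read $|{\rm Min}(R)|-1=|{\rm Max}(T(R))|-1=\log_2(\alpha(\mathbb{AG}_N(R)))$. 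Modulo this bookkeeping, assembling the three quantities is immediate.

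The only substantive point — and the one I would treat most carefully — is the identification $T(R)\cong\prod_{i=1}^{n}R_{\mathfrak{p}_i}$ together with the count of its maximal ideals. The content there is entirely contained in Lemma~\ref{matlis} and the fact that in a Noetherian reduced ring $Z(R)=\bigcup_{\mathfrak{p}\in{\rm Min}(R)}\mathfrak{p}$ and each minimal-prime localization is a field; no genuinely new argument is needed beyond what was used to prove Theorem~\ref{reducedfinitemin}. Thus I expect no real obstacle: the corollary is a packaging of Theorem~\ref{reducedfinitemin} with a well-known structural fact about $T(R)$, and the main thing to watch is the exponent normalization described above.
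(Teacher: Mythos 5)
Your proposal is correct and follows essentially the same route as the paper: Lemma~\ref{matlis} identifies $T(R)$ with the product of the fields $R_{\mathfrak{p}_i}$, giving $|{\rm Max}(T(R))|=|{\rm Min}(R)|$, and Theorem~\ref{reducedfinitemin} supplies $\alpha(\mathbb{AG}_N(R))=2^{|{\rm Min}(R)|-1}$. Your off-by-one observation is also well taken: the paper's own proof only establishes $2^{|{\rm Min}(R)|-1}=2^{|{\rm Max}(T(R))|-1}=\alpha(\mathbb{AG}_N(R))$, so the displayed identity in the corollary should read $|{\rm Min}(R)|=|{\rm Max}(T(R))|=\log_2(\alpha(\mathbb{AG}_N(R)))+1$, and you were right to flag and correct this normalization rather than let it pass.
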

\begin{proof}
{Setting ${\rm Min}(R)=\{\mathfrak{p}_1,\mathfrak{p}_2,\ldots, \mathfrak{p}_n\}$ and $S=R\setminus \bigcup_{\mathfrak{p}\in {\rm Min}(R)}\mathfrak{p}$,  we have $T(R)\cong R_{{\mathfrak{p}}_1}\times\cdots\times R_{{\mathfrak{p}}_n}$, by Lemma \ref{matlis}. Since every $R_{{\mathfrak{p}}_i}$ is a field, Corollary \ref{artinianalpha} and Theorem \ref{reducedfinitemin} imply that $2^{|{\rm Min}(R)|-1}=2^{|{\rm Max}(T(R))|-1}=\alpha(\mathbb{AG}_N(R))$. So, the assertion follows.
}
\end{proof}


\vspace{5mm} \noindent{\bf\large 4. The Genus of Nil-Graphs of Ideals}

In \cite[Corollary 2.11]{behboodigenus}, it is proved that for integers $q > 0$ and $g\geq0$, there are finitely many Artinian rings
$R$ satisfying the following conditions:\\
(1) $\gamma(\mathbb{AG}(R)) = g$,\\
(2) $|\frac{R}{\mathfrak{m}}|\leq q$ for any maximal ideal $\mathfrak{m}$ of $R$.

We begin this section with a similar result for the nil-graph of ideals.

\begin{thm}\label{genus}
Let $g$ and $q>0$ be non-negative integers. Then there are finitely many Artinian rings $R$ such that $\gamma(\mathbb{AG}_N(R))=g$ and $|\frac{R}{\mathfrak{m}}|\leq q$, for every maximal ideal $\mathfrak{m}$ of $R$.
\end{thm}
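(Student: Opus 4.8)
The plan is to reduce the statement to a bound on the number of maximal ideals and on the size of each local factor, then invoke the structure theorem for Artinian rings. First I would write $R\cong R_1\times\cdots\times R_n$ with each $(R_i,\mathfrak{m}_i)$ Artinian local, using \cite[Theorem 8.7]{ati}. The key observation is that $\mathbb{AG}_N(R)$ contains $\mathbb{AG}(R)$ as a subgraph (noted in the introduction), and more usefully that it contains a large complete subgraph coming from the ideals in $\mathcal{T}(R)$ together with the nilpotent ideals. Concretely, for each subset of the index set $[n]$, the ideal obtained by placing $R_k$ in those coordinates and $(0)$ elsewhere is a vertex, and two such are adjacent whenever the corresponding subsets are disjoint; restricting to subsets of size $\le n/2$ that pairwise intersect is the wrong direction, so instead I would locate a clean complete bipartite or complete subgraph whose order grows with $n$. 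For instance, the vertices $e_i:=(0)\times\cdots\times R_i\times\cdots\times(0)$ are pairwise adjacent (any two have product $(0)\subseteq{\rm Nil}(R)$), giving a $K_n$ inside $\mathbb{AG}_N(R)$; hence $\gamma(\mathbb{AG}_N(R))\ge\gamma(K_n)=\lceil(n-3)(n-4)/12\rceil$. Since $\gamma(\mathbb{AG}_N(R))=g$ is fixed, this forces $n\le N(g)$ for an explicit bound $N(g)$.

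Next I would bound the size of each local factor $R_i$. Having fixed $n$, suppose some $R_i$ is not a field, i.e.\ $\mathfrak{m}_i\ne(0)$. Inside $R_i$ the ideals $(0)\subsetneq\mathfrak{m}_i^{t-1}\subsetneq\cdots\subsetneq\mathfrak{m}_i$ (where $\mathfrak{m}_i^t=(0)$) are all vertices of $\mathbb{AG}_N(R_i)$, and by Remark \ref{niladjacent} the nilpotent ideals are adjacent to everything, so $\mathbb{AG}_N(R_i)$ is complete on all of $\mathbb{I}(R_i)^*$. The number of proper non-trivial ideals of $R_i$ is bounded below by a function of $|R_i|$: a finite local ring with residue field of size $\le q$ and $\ell_i:={\rm length}(R_i)$ has at least $\ell_i-1$ proper non-trivial ideals in a composition chain, and $|R_i|\le q^{\ell_i}$. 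Combining the local complete subgraphs with the product structure (each $\mathbb{I}(R_i)^*$ embeds as adjacent-to-the-nil-ideal vertices in $\mathbb{AG}_N(R)$), a large $|R_i|$ forces a large clique in $\mathbb{AG}_N(R)$, hence large genus; so $\ell_i\le L(g)$ and therefore $|R_i|\le q^{L(g)}$. This pins down both $n$ and each $|R_i|$ in terms of $g$ and $q$, and there are only finitely many Artinian local rings of bounded cardinality, so only finitely many $R$.

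The main obstacle is making the clique-size estimates sharp enough that they actually bound the length of each local factor, not merely detect non-fields: a single nilpotent ideal together with the $e_i$'s only gives $K_{n+1}$, which is not yet sensitive to $|R_i|$. The fix is to use that \emph{all} vertices contained in ${\rm Nil}(R)$ are mutually adjacent and adjacent to the $e_i$, and ${\rm Nil}(R)=\mathfrak{m}_1\times\cdots\times\mathfrak{m}_n$ together with, in each non-field factor, the proper ideals of $R_i$ embedded via $(0)\times\cdots\times I\times\cdots\times(0)$ — these lie in ${\rm Nil}(R)$ and are thus pairwise adjacent, producing a clique of size roughly $\sum_i(\ell_i-1)$ inside $\mathbb{AG}_N(R)$. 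Then $\gamma(K_m)=g$ with $m=\sum_i(\ell_i-1)+n$ gives the bound on $\sum\ell_i$. A secondary technical point is verifying that distinct composition-series ideals really give distinct vertices and that the embedding into $\mathbb{AG}_N(R)$ preserves adjacency, both of which are routine from the definition of the nil-graph and the fact that nilpotent ideals annihilate modulo ${\rm Nil}(R)$.
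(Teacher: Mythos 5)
Your argument is correct, and it follows the same basic strategy as the paper (structure theorem for Artinian rings, then a complete-subgraph genus bound to control the size of each local factor, then the residue-field hypothesis to bound $|R_i|$), but the details are genuinely different and in one respect tighter. The paper works factor by factor: it notes that $\mathbb{AG}_N(R_i)$ is complete for each Artinian local $R_i$, deduces $\mathbb{I}(R_1)\leq 12g+5$ from $\gamma(K_{\mathbb{I}(R_1)})\leq g$, and then invokes \cite[Lemma 2.9]{behboodigenus} to get $|R_i|\leq |R_i/\mathfrak{m}_i|^{\mathbb{I}(R_i)}\leq q^{\mathbb{I}(R_i)}$; its final estimate $|R|\leq q^{n(12g+5)}$ still involves $n$, and the bound on $n$ (which does follow from the $K_n$ formed by the idempotent-component ideals $e_i$, exactly as you observe) is left implicit. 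You instead assemble one global clique inside $\mathbb{AG}_N(R)$, consisting of the $e_i$ together with the composition-series ideals of each factor embedded inside ${\rm Nil}(R)=\mathfrak{m}_1\times\cdots\times\mathfrak{m}_n$ (all pairwise adjacent by Remark \ref{niladjacent}), so that $\gamma(K_m)\leq g$ with $m=\sum_i(\ell_i-1)+n$ simultaneously bounds $n$ and every length $\ell_i$; combined with $|R_i|=|R_i/\mathfrak{m}_i|^{\ell_i}\leq q^{\ell_i}$ this is self-contained, avoids the external lemma, and closes the $n$-dependence explicitly. The only points to tidy are cosmetic: when $n=1$ the ideal $e_1=R$ is not a vertex (drop it from the clique count), and the relation should read $\gamma(K_m)\leq g$ rather than equality; neither affects the argument.
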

\begin{proof}
{Let $R$ be an Artinian ring. Then \cite[Theorem 8.7]{ati} implies that $R\cong R_1\times\cdots\times R_n$, where $n$ is a positive integer and each $R_i$ is an Artinian local ring. We claim that for every $i$, $|R_i|\leq q^{\mathbb{I}(R_i)}$. Since $\gamma(\mathbb{AG}_N(R))<\infty$, we deduce that $\gamma(\mathbb{AG}_N(R_i))<\infty$, for every $i$. So by Remark \ref{niladjacent} and formula for the genus of complete graphs, every $R_i$ has finitely many ideals. Therefore, by hypothesis and \cite[Lemma 2.9]{behboodigenus}, we have $|R_i|\leq |\frac{R_i}{\mathfrak{m}_i}|^{\mathbb{I}(R_i)}\leq q^{\mathbb{I}(R_i)}$ and so the claim is proved. To complete the proof, it is sufficient to show that $|R|$ is bounded by a constant, depending only on $g$ and $q$. With no loss of generality, suppose that $|R_1|\geq |R_i|$, for every $i\geq 2$. By the formula for the genus of complete graphs, $\frac{\mathbb{I}(R_1)-5}{12}\leq \gamma(\mathbb{AG}_N(R_1))\leq g$. Hence $|\mathbb{I}(R_1)|\leq  12g+5$ and so
$$|R|\leq |R_1|^n\leq (q^{\mathbb{I}(R_1)})^n\leq q^{n(12g+5)}.$$ So, we are done.}
\end{proof}

 Let $\{R_i\}_{i\in\mathbb{N}}$ be an infinite family of Artinian rings such that every $R_i$ is a direct product of $4$ fields. Then it is clear that $\gamma(\mathbb{AG}_N(R_i))=1$, for every $i$. So, the condition $|\frac{R}{\mathfrak{m}}|\leq q$, for every maximal ideal $\mathfrak{m}$ of $R$, in the previous theorem is necessary.

 Let $R$ be a Noetherian ring. Then one may ask does $\gamma(\mathbb{AG}_N(R))<\infty$ imply that $R$ is Artinian?
The answer of this question is negative. To see this, let $R\cong S\times D$, where $S$ is a ring with at most one non-trivial ideal and $D$ is a Noetherian integral domain which is not a field. Then it is easy to check that $\mathbb{AG}_N(R)$ is a planar graph and $R$ is a Noetherian ring which is not Artinian.

 Before proving the next lemma, we need the following notation. Let $G$ be a graph and $V'$ be the set of vertices of $G$ whose
degrees equal one. We use $\widetilde{G}$ for the subgraph
$G\setminus V'$ and call it the {\it reduction} of $G$.

\begin{lem}\label{reduction}
$\gamma(G)=\gamma(\widetilde{G})$, where $\widetilde{G}$ is the
reduction of $G$.
\end{lem}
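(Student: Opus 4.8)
The plan is to show that removing a single degree-one vertex does not change the genus, and then iterate (in the finite case) or argue directly (in general). The statement $\gamma(G)=\gamma(\widetilde G)$ follows once we know that for any vertex $v$ of degree at most one, $\gamma(G)=\gamma(G\setminus\{v\})$; applying this repeatedly to all vertices in $V'$ gives the result. Note that $V'$ here is the set of vertices of degree exactly one, so $G$ and $\widetilde G$ differ only by pendant vertices (and the pendant edges attached to them).

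First I would establish the inequality $\gamma(\widetilde G)\le\gamma(G)$, which is immediate since $\widetilde G$ is a subgraph of $G$ and genus is monotone under taking subgraphs. For the reverse inequality, suppose $\widetilde G$ embeds in $S_k$ with $k=\gamma(\widetilde G)$. I would reattach the pendant vertices one at a time: if $w\in V'$ is adjacent to $u$ in $G$, then $u$ is a vertex of $\widetilde G$ (its degree in $G$ is at least $2$, since $w$ is one neighbour and a pendant vertex cannot be the sole neighbour of another pendant vertex unless $G$ has a component $K_2$, a trivial case one handles separately). Pick a face of the embedding incident to $u$, place $w$ inside that face, and draw the edge $uw$ within the face without crossings. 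This yields an embedding of $G$ (with one more pendant vertex) in the same surface $S_k$. Iterating over all of $V'$ — there are finitely many such vertices when $G$ is finite, which is the only case relevant to this paper — gives an embedding of $G$ in $S_k$, hence $\gamma(G)\le k=\gamma(\widetilde G)$.

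The two inequalities together give $\gamma(G)=\gamma(\widetilde G)$. The one case needing separate attention is when a connected component of $G$ is a single edge $K_2$: both its endpoints have degree one, so the whole component disappears in $\widetilde G$; but $K_2$ is planar and contributes nothing to the genus, so this causes no difficulty. Similarly isolated vertices (degree zero) are not in $V'$ by the definition given, so they are untouched, and in any case they do not affect genus.

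The main obstacle is purely a matter of care rather than depth: one must make sure that the face-incidence argument for reattaching a pendant vertex is valid, i.e., that every vertex $u$ that survives into $\widetilde G$ is genuinely incident to at least one face in the chosen embedding of $\widetilde G$ (true for any $2$-cell embedding, and one may always take the genus-realizing embedding to be cellular on each component), and that adding a pendant vertex and edge inside a single face never forces a new handle. This last point is intuitively clear — a pendant edge is contractible and can be drawn in a small disk — but it is the step one should state carefully. Everything else is bookkeeping over the finite set $V'$.
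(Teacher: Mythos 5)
Your proof is correct. The paper actually states Lemma \ref{reduction} with no proof at all (treating it as folklore), so there is nothing to compare against; your argument --- genus monotonicity under subgraphs for $\gamma(\widetilde G)\le\gamma(G)$, and reattaching each pendant vertex inside a small disk about its surviving neighbour in a minimum-genus embedding of $\widetilde G$ for the reverse inequality, with the $K_2$-component degeneracy handled separately --- is the standard one and is sound for the finite graphs to which the paper applies it.
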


\begin{remark}\label{face}
It is well-known that if $G$ is a connected graph of genus $g$,
with $n$ vertices, $m$ edges and $f$ faces, then $n-m+f=2-2g$.
\end{remark}

In the following, all Artinian rings, whose nil-graphs of ideals have genus at most one, are classified.
\begin{thm}\label{genuslesssthantwo}
Let $R$ be an Artinian ring. If $\gamma(\mathbb{AG}_N(R))<2$,
then $|{\rm Max}(R)|\leq 4$ and moreover, the following statements
hold.
\begin{enumerate}
\item[\rm(i)] If $|{\rm Max}(R)|= 4$, then $\gamma(\mathbb{AG}_N(R))<2$ if and only if $R$ is isomorphic to a direct product of four fields.
 \item[\rm(ii)] If $|{\rm Max}(R)|= 3$, then $\gamma(\mathbb{AG}_N(R))<2$ if and only if
$R\cong F_1\times F_2\times R_3$, where $F_1, F_2$ are fields and
$R_3$ is an Artinian local ring with at most two non-trivial
ideals.
 \item[\rm(iii)] If $|{\rm Max}(R)|= 2$, then $\gamma(\mathbb{AG}_N(R))<2$ if and only if
either $R\cong F_1\times R_2$, where $F_1$ is a field and $R_2$
is an Artinian local ring with at most three non-trivial ideals or
$R\cong R_1\times R_2$, where every $R_i$ {\rm(}$i=1,2${\rm)} is an
Artinian local ring with at most one non-trivial ideal.
\item[\rm(iv)] If $R$ is local, then $\gamma(\mathbb{AG}_N(R))<2$ if and only if $R$ has at most $7$ non-trivial ideals.
\end{enumerate}
\end{thm}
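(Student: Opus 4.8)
The plan is to reduce $R$ to a finite product of Artinian local rings, to describe $\mathbb{AG}_N(R)$ as a ``blow-up'' of a disjointness graph, and then to handle the possible values of $|{\rm Max}(R)|$ one at a time, using Remark \ref{face} for the negative directions and explicit toroidal embeddings for the positive ones. By \cite[Theorem 8.7]{ati} write $R\cong R_1\times\cdots\times R_n$ with each $R_i$ Artinian local with maximal ideal $\mathfrak{m}_i$; then $n=|{\rm Max}(R)|$ and ${\rm Nil}(R)=\mathfrak{m}_1\times\cdots\times\mathfrak{m}_n$. Since $\mathfrak{m}_k={\rm Nil}(R_k)$ is the unique maximal ideal of $R_k$, one has $I_kJ_k\subseteq{\rm Nil}(R_k)$ unless $I_k=J_k=R_k$; hence two non-trivial ideals $I=I_1\times\cdots\times I_n$ and $J=J_1\times\cdots\times J_n$ are adjacent in $\mathbb{AG}_N(R)$ if and only if $M(I)\cap M(J)=\varnothing$, where $M(I)=\{k:I_k=R_k\}$ (and one checks that every non-trivial ideal of $R$ is in fact a vertex). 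Thus $\mathbb{AG}_N(R)$ is obtained from the disjointness graph on the sets $M\subsetneq[n]$ (two such joined iff disjoint) by replacing each $M$ with its fibre of ideals $I$ having $M(I)=M$, keeping all edges inside the fibre of $\varnothing$. I will also use the monotonicity $\gamma(\mathbb{AG}_N(R))\ge\gamma(\mathbb{AG}_N(R/\mathfrak{a}))$ valid for any ideal $\mathfrak{a}\subseteq{\rm Nil}(R)$: the ideals of $R$ containing $\mathfrak{a}$ induce a copy of $\mathbb{AG}_N(R/\mathfrak{a})$ inside $\mathbb{AG}_N(R)$, because $\mathfrak{a}\subseteq{\rm Nil}(R)$ lets the condition $IJ\subseteq{\rm Nil}(R)$ descend to the quotient.

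To bound $|{\rm Max}(R)|$, assume $n\ge5$. Then $\mathbb{AG}_N(R)$ contains, as an induced subgraph, the disjointness graph $D$ on the nonempty proper subsets of a $5$-element set: a subset $\varnothing\ne\Delta\subsetneq\{1,\dots,5\}$ is realised by the ideal whose $k$-th component is $R_k$ for $k\in\Delta$ and $(0)$ otherwise. The five vertices of $D$ indexed by the $4$-element sets have degree $1$, so by Lemma \ref{reduction} $\gamma(D)=\gamma(\widetilde{D})$, where $\widetilde{D}$ is connected with $25$ vertices and $85$ edges; since $3f\le2m$, Remark \ref{face} forces $\gamma(\widetilde{D})\ge3$, hence $\gamma(\mathbb{AG}_N(R))\ge3$. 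Thus $\gamma(\mathbb{AG}_N(R))<2$ implies $|{\rm Max}(R)|\le4$. Case (iv) is then immediate: when $R$ is local, every non-trivial ideal $I$ is a vertex (take $J=I$, since $I^2\subseteq I\subseteq\mathfrak{m}={\rm Nil}(R)$) and, by Remark \ref{niladjacent}, any two vertices are adjacent, so $\mathbb{AG}_N(R)\cong K_t$ with $t=|\mathbb{I}(R)^*|$; as $\gamma(K_t)=\lceil(t-3)(t-4)/12\rceil$, we get $\gamma(K_t)<2$ exactly when $(t-3)(t-4)\le12$, i.e. when $t\le7$.

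For $n\in\{2,3,4\}$ I would, for each choice of the numbers $d_i=|\mathbb{I}(R_i)^*|$, use the blow-up description to write $\mathbb{AG}_N(R)$ as $K_N$ (with $N=|\mathbb{I}(R)^*|$) minus its non-edges, which are exactly the pairs of ideals sharing a maximal coordinate; an inclusion--exclusion over the $n$ coordinates yields $|V|$ and $|E|$ explicitly. For the ``only if'' directions, once this count gives $|E|>3|V|$ (or more than $3$ times the order of the reduction), or once one exhibits a subgraph of genus $\ge2$ such as $K_{5,5}$, one gets $\gamma\ge2$ from Remark \ref{face} or the genus formulas; applying this to the quotients $R/(\mathfrak{m}_i^2\times\prod_{j\ne i}\mathfrak{m}_j)$ via the monotonicity above shows that for $n=4$ every $R_i$ must be a field, that for $n=3$ at most one factor may be a non-field and it then has only boundedly many non-trivial ideals, and for $n=2$ it bounds the number of non-trivial ideals of the non-field factor(s); the precise thresholds in (i)--(iii) fall out of these computations. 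For the ``if'' directions I would exhibit embeddings in $S_1$: for a product of four fields, $\mathbb{AG}_N(R)=G_{\mathcal{T}}(R)$, and after stripping pendant vertices by Lemma \ref{reduction} it is a small graph that contains $K_{3,3}$ and embeds in the torus; for $R_1\times R_2$ with one non-trivial ideal in each factor, $\mathbb{AG}_N(R)$ is $K_7$ with two disjoint edges deleted, so $\gamma\le\gamma(K_7)=1$; for $F\times R_2$ it is a complete split graph $K_t\vee\overline{K_t}$, and for $F_1\times F_2\times R_3$ a small blow-up of the disjointness graph on subsets of $[3]$, and in those cases I would write down explicit rotation systems giving toroidal embeddings.

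The step I expect to be the real obstacle is the positive (``if'') direction: producing, for each boundary ring, an explicit embedding in $S_1$, and verifying that the thresholds on the number of ideals forced by the Euler-formula lower bounds are actually attained rather than merely not contradicted. The denser boundary graphs---the $8$-vertex complete split graph from $F\times R_2$ and the blow-ups of the disjointness graphs on subsets of $[3]$ and $[4]$---are where these toroidal embeddings require genuine care; the negative direction, by contrast, is a routine vertex-and-edge count fed into Remark \ref{face}.
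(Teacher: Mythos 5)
Your structural setup is correct and in places cleaner than the paper's: the observation that, after writing $R\cong R_1\times\cdots\times R_n$, two non-trivial ideals are adjacent iff $M(I)\cap M(J)=\varnothing$ is exactly the right way to organize the graph, and your treatment of $n\geq 5$ (strip the degree-one vertices from the disjointness graph on nonempty proper subsets of $[5]$, count $25$ vertices and $85$ edges, and apply the edge bound $m\leq 3n-6+6g$ from Remark \ref{face}) is valid and even gives $\gamma\geq 3$, whereas the paper exhibits an explicit $K_{3,7}$ with $\gamma(K_{3,7})=2$. Part (iv) you prove completely, matching the paper, and your observation that $R_1\times R_2$ with one non-trivial ideal in each factor gives $K_7$ minus two independent edges, hence $\gamma\leq\gamma(K_7)=1$, is a genuine (and complete) argument for that sub-case.

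The gap is that the quantitative content of (i)--(iii) is announced rather than proved. For the ``only if'' directions you say the thresholds ``fall out of these computations,'' but the computations are not done; the paper instead exhibits explicit $K_{4,5}$, $K_{4,6}$ and $K_{3,7}$ subgraphs in each offending configuration (e.g.\ when some $R_i$ in a product of four factors has a non-trivial ideal, or when $R_3$ in $F_1\times F_2\times R_3$ has three non-trivial ideals), and one of these must actually be produced at each threshold. More seriously, the ``if'' directions --- that a product of four fields, $F_1\times F_2\times R_3$ with $|\mathbb{I}(R_3)^*|\le 2$, and $F_1\times R_2$ with $|\mathbb{I}(R_2)^*|\in\{2,3\}$ all embed in the torus --- are exactly what you flag as ``the real obstacle'' and then do not carry out; Remark \ref{face} can only certify a lower bound on the genus, so an embedding (or at least a face count of an exhibited embedding, as the paper does for the four-fields case with $n=10$, $m=21$, $f=11$) is indispensable here. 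Since the theorem is precisely the assertion that these boundary rings have genus exactly one and nothing larger does, leaving the embeddings and the threshold counts unexecuted means the proof is a correct program rather than a proof; to be fair, the paper itself is terse at the same points (``one can easily show''), but it does at least supply the bipartite witnesses and the Euler data that your version still owes.
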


\begin{proof}
{Let $\gamma(\mathbb{AG}_N(R))<2$. First we show that $|{\rm
Max}(R)|\leq 4$. Suppose to the contrary, $|{\rm Max}(R)|\geq 5$.
By \cite[Theorem 8.7]{ati}, $R\cong R_1 \times\cdots\times R_5$,
where every $R_i$ is an Artinian ring. Let $$I_1=R_1\times
(0)\times(0)\times (0)\times (0);\hspace{1cm} I_2=(0)\times
R_2\times (0)\times (0)\times (0);$$
$$I_3=R_1\times
R_2\times(0)\times (0)\times (0);\hspace{1cm} J_1=(0)\times
(0)\times R_3\times (0)\times (0);$$
$$J_2=(0)\times (0)\times (0)\times
R_4\times (0);\hspace{1cm} J_3=(0)\times (0)\times(0)\times
(0)\times R_5;$$
$$J_4=(0)\times (0)\times R_3\times
R_4\times(0);\hspace{1cm} J_5=(0)\times (0)\times (0)\times
R_4\times R_5;$$
$$J_6=(0)\times (0)\times R_3\times (0)\times
R_5;\hspace{1cm} J_7=(0)\times (0)\times R_3\times R_4\times
R_5.$$ Then for every $1\leq i\leq 3$ and every $1\leq j\leq 7$,
$I_i$ and $J_j$ are adjacent and so $K_{3,7}$ is a subgraph of
$\mathbb{AG}_N(R)$. Thus by the formula for the genus of the
complete bipartite graph, we have
$\gamma(\mathbb{AG}_N(R))\geq\gamma(K_{3,7})\geq 2$, a
contradiction.

(i) Let $|{\rm Max}(R)|=4$ and $\gamma(\mathbb{AG}_N(R))<2$. By
\cite[Theorem 8.7]{ati}, $R\cong R_1\times R_2\times R_3\times
R_4$, where every $R_i$ is an Artinian local ring. We show that
every $R_i$ is a field. Suppose not and with no loss of
generality, $R_4$ contains a non-trivial ideal, say
$\mathfrak{a}$. Set
$$I_1=R_1\times
(0)\times(0)\times (0);\ I_2=(0)\times R_2\times (0)\times (0);\
I_3=R_1\times R_2\times(0)\times (0);$$
$$I_4=R_1\times R_2\times(0)\times \mathfrak{a};\ J_1=(0)\times (0)\times R_3\times
(0);\ J_2=(0)\times (0)\times (0)\times R_4;$$
$$J_3=(0)\times
(0)\times(0)\times \mathfrak{a};\ J_4=(0)\times (0)\times
R_3\times R_4;\ J_5=(0)\times (0)\times R_3\times \mathfrak{a}.$$
It is clear that every $I_i$, $1\leq i\leq 4$, is adjacent to
$J_j$, $1\leq j\leq 5$, and so $K_{4,5}$ is a subgraph of
$\mathbb{AG}_N(R)$. Thus by the formula for the genus of the
complete bipartite graph, we have
$\gamma(\mathbb{AG}_N(R))\geq\gamma(K_{4,5})\geq 2$, a
contradiction. Conversely, assume that $R\cong F_1\times
F_2\times F_3\times F_4$, where every $F_i$ is a field. We show
that $\gamma(\mathbb{AG}_N(R))=1$. By Lemma \ref{reduction}, it is
enough to prove that $\gamma(\widetilde{\mathbb{AG}_N(R)})=1$. We
know that $\widetilde{\mathbb{AG}_N(R)}$ has $4$ vertices of
degree $6$ and $6$ vertices of degree $3$. So,
$\widetilde{\mathbb{AG}_N(R)}$ has $n=10$ vertices and $m=21$
edges. Also, it is not hard to check that
$\widetilde{\mathbb{AG}_N(R)}$ has $f=11$ faces. Now, Remark
\ref{face} implies that $\gamma(\widetilde{\mathbb{AG}_N(R)})=1$.

(ii) Let $\gamma(\mathbb{AG}_N(R))<2$ and $R\cong R_1\times
R_2\times R_3$, where every $R_i$ is an Artinian local ring. We
show that at least two of the three rings $R_1$, $R_2$ and $R_3$
are fields. Suppose not and with no loss of generality, $\mathfrak{b}$ and
$\mathfrak{c}$ are non-trivial ideals of $R_2$ and $R_3$, respectively. Set
$$I_1=R_1\times
(0)\times(0);\ I_2=(0)\times R_2\times (0);\ I_3=R_1\times
R_2\times(0); \ I_4=R_1\times \mathfrak{b}\times(0);$$
$$J_1=(0)\times
\mathfrak{b}\times (0);\ J_2=(0)\times (0)\times \mathfrak{c};\ J_3=(0)\times
\mathfrak{b}\times \mathfrak{c};$$
$$ J_4=(0)\times (0)\times R_3;\ J_5=(0)\times
\mathfrak{b}\times R_3.$$ It is clear that every $I_i$, $1\leq i\leq 4$,
is adjacent to $J_j$, $1\leq j\leq 5$, and so $K_{4,5}$ is a
subgraph of $\mathbb{AG}_N(R)$. Thus by the formula for the genus
of the complete bipartite graph, we have
$\gamma(\mathbb{AG}_N(R))\geq\gamma(K_{4,5})\geq 2$, a
contradiction. Thus with no loss of generality, we can suppose
that $R\cong F_1\times F_2\times R_3$, where $F_1$ and $F_2$ are
fields and $R_3$ is an Artinian local ring. Now, we prove that
$R_3$ has at most two non-trivial ideals. Suppose to the contrary,
$\mathfrak{a}$, $\mathfrak{b}$ and $\mathfrak{c}$ are three
distinct non-trivial ideals of $R_3$. Let
$$I_1=(0)\times (0) \times R_3;\ I_2=(0)\times (0) \times \mathfrak{a};\ I_3=(0)\times (0) \times \mathfrak{b};\
I_4=(0)\times (0) \times \mathfrak{c};$$
$$J_1=F_1\times (0)\times (0);\ J_2=(0)\times F_2\times (0);\ J_3=F_1\times F_2\times (0);$$
$$J_4=F_1\times F_2\times \mathfrak{a};\ J_5=F_1\times F_2\times \mathfrak{b};\ J_6=F_1\times F_2\times \mathfrak{c}.$$
Clearly, every $I_i$, $1\leq i\leq 4$, is adjacent to $J_j$,
$1\leq j\leq 6$, and so $K_{4,6}$ is a subgraph of
$\mathbb{AG}_N(R)$. Thus by the formula for the genus of the
complete bipartite graph, we have
$\gamma(\mathbb{AG}_N(R))\geq\gamma(K_{4,6})\geq 2$, a
contradiction. Conversely, let $R\cong F_1\times F_2\times R_3$,
where $F_1$ and $F_2$ are fields and $R_3$ be a ring with two
non-trivial ideals $\mathfrak{c}$ and $\mathfrak{c}'$. Set
$$I_1=(0)\times (0) \times R_3;\ I_2=(0)\times (0) \times \mathfrak{c};\ I_3=(0)\times (0) \times \mathfrak{c}';$$
$$J_1=F_1\times (0)\times (0);\ J_2=(0)\times F_2\times (0);\ J_3=F_1\times F_2\times (0).$$
Then for every $1\leq i,j\leq 3$, we have $I_iJ_j=(0)$. Hence
$\gamma(\mathbb{AG}_N(R))\geq\gamma(K_{3,3})\geq 1$. However, in
this case, $\mathbb{AG}_N(R)$ is a subgraph of
$\mathbb{AG}_N(F_1\times F_2\times F_3\times F_4)$ (in which
every $F_i$ is a field). Therefore, by (i),
$\gamma(\mathbb{AG}_N(R))=1$. If $R_3$ contains at most one
non-trivial ideal, then it is not hard to check that
$\mathbb{AG}_N(R)$ is a planar graph. This completes the proof of
(ii).

(iii) Assume that $\gamma(\mathbb{AG}_N(R))<2$ and $R\cong
R_1\times R_2$, where $R_1$ and $R_2$ are Artinian local rings.
We prove the assertion in the following two cases:

Case 1. $R\cong F_1\times R_2$, where $F_1$ is a field and $R_2$
is an Artinian local ring. In this case, we show that $R_2$ has
at most three non-trivial ideals. Suppose to the contrary, $R_2$
has at least four non-trivial ideals. Then for every two non-zero
ideals $I_2\neq R_2$ and $J_2$ of $R_2$, the vertices $F_1\times
I_2$ and $(0)\times J_2$ are adjacent and so $K_{4,5}$ is a
subgraph of $\mathbb{AG}_N(R)$. Thus by the formula for the genus
of the complete bipartite graph, we have
$\gamma(\mathbb{AG}_N(R))\geq\gamma(K_{4,5})\geq 2$, a
contradiction.

 Case 2. Neither $R_1$ nor $R_2$ is a field. We prove that every
$R_i$ has at most one non-trivial ideal. Suppose not and with no
loss of generality, $R_2$ has two distinct non-trivial ideals.
Then every ideal of the form $R_1\times J$ is adjacent to every
ideal of the form $I\times K$, where $I$ and $J$ are proper ideals
of $R_1$ and $R_2$, respectively, and $K$ is an arbitrary ideal of
$R_2$. So $\gamma(\mathbb{AG}_N(R))\geq\gamma(K_{3,7})\geq 2$, a
contradiction.

 Conversely, if $R\cong F_1\times R_2$, where $F_1$ is a field and
$R_2$ is an Artinian local ring with $n\leq 3$ non-trivial
ideals, then one can easily show that
$$\gamma(\mathbb{AG}_N(R))=
\begin{cases}
1;   & n=2,3\\
0;    & n=1.
\end{cases}
$$
 Now, suppose that  $R\cong R_1\times R_2$, where $R_1$ and $R_2$
are Artinian local rings with one non-trivial ideals. Then it is
not hard to show that $\gamma(\mathbb{AG}_N(R))=1$. This comletes
the proof of (iii).

(iv) This follows from the formula of genus for the complete
graphs and Remark \ref{niladjacent}.}
\end{proof}
From the proof of the previous theorem, we have the following immediate corollary.
\begin{cor}\label{planarity}
Let $R$ be an Artinian ring. Then $\mathbb{AG}_N(R)$ is a planar
graph if and only if $|{\rm Max}(R)|\leq 3$ and $R$ satisfies one
of the following conditions:
\begin{enumerate}
 \item[\rm(i)] $R$ is isomorphic to the direct product of three fields.
 \item[\rm(ii)] $R\cong F_1\times R_2$, where $F_1$ is a field and $R_2$ is an Artinian local ring
with at most one non-trivial ideal.
 \item[\rm(iii)] $R$ is a local ring with at most four non-trivial ideals.
\end{enumerate}
\end{cor}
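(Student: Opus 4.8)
The plan is to obtain the statement from Theorem \ref{genuslesssthantwo} by specializing to genus $0$. A planar graph is precisely a graph of genus $0$, hence of genus $<2$, so Theorem \ref{genuslesssthantwo} applies and already yields $|{\rm Max}(R)|\le 4$ together with a list of four structural families for $R$. It remains only to decide, inside each family, which rings attain genus exactly $0$ rather than genus $1$; for this I will recycle the complete bipartite subgraphs exhibited in the proof of Theorem \ref{genuslesssthantwo}, together with the fact that $\gamma(K_5)=\gamma(K_{3,3})=1$.

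First I would rule out $|{\rm Max}(R)|=4$: by Theorem \ref{genuslesssthantwo}(i) the only candidate is $R\cong F_1\times F_2\times F_3\times F_4$, and the Euler-formula computation in the proof of part (i) shows its (reduced) nil-graph has genus exactly $1$, so no Artinian ring with four maximal ideals has planar nil-graph, and planarity forces $|{\rm Max}(R)|\le 3$. For $|{\rm Max}(R)|=3$, Theorem \ref{genuslesssthantwo}(ii) restricts us to $R\cong F_1\times F_2\times R_3$ with $R_3$ an Artinian local ring having at most two non-trivial ideals. I would show that if $R_3$ is not a field then $\mathbb{AG}_N(R)$ is non-planar: choosing a non-trivial (hence nilpotent) ideal $\mathfrak c$ of $R_3$, one checks that the six distinct vertices $F_1\times(0)\times(0)$, $F_1\times(0)\times\mathfrak c$, $(0)\times(0)\times R_3$ and $(0)\times F_2\times(0)$, $(0)\times F_2\times\mathfrak c$, $(0)\times(0)\times\mathfrak c$ span a copy of $K_{3,3}$ in $\mathbb{AG}_N(R)$, whence $\gamma(\mathbb{AG}_N(R))\ge\gamma(K_{3,3})=1$. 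Thus planarity forces $R_3$ to be a field, i.e.\ case (i); and conversely $\mathbb{AG}_N(F_1\times F_2\times F_3)=\mathbb{AG}(F_1\times F_2\times F_3)$ is a triangle with a pendant edge at each of its three vertices, hence planar.

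For $|{\rm Max}(R)|=2$, Theorem \ref{genuslesssthantwo}(iii) restricts us either to $R\cong F_1\times R_2$ with $R_2$ local having at most three non-trivial ideals, or to $R\cong R_1\times R_2$ with each $R_i$ Artinian local having at most one non-trivial ideal; moreover the proof of (iii) shows $\gamma(\mathbb{AG}_N(R))=1$ whenever $R_2$ has two or three non-trivial ideals in the first alternative, and whenever both $R_1$ and $R_2$ have a non-trivial ideal in the second. Hence planarity leaves only $R\cong F_1\times R_2$ with $R_2$ local having at most one non-trivial ideal; in that case $R$ has at most four non-trivial ideals, so $\mathbb{AG}_N(R)$ is a subgraph of $K_4$ and is planar, which is precisely case (ii). Finally, if $R$ is local, then Remark \ref{niladjacent} gives $\mathbb{AG}_N(R)\cong K_t$ with $t=|\mathbb{I}(R)^*|$, which is planar if and only if $t\le 4$; this is case (iii). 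Since each of (i), (ii) and (iii) already determines $|{\rm Max}(R)|$ (to be $3$, $2$ and $1$, respectively), assembling the cases gives the corollary, and every converse implication has been verified along the way.

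The only point I expect to require genuine work, rather than a direct appeal to Theorem \ref{genuslesssthantwo}, is the sub-case $|{\rm Max}(R)|=3$ with $R_3$ having exactly one non-trivial ideal: there the theorem guarantees only genus $\le 1$, so the explicit $K_{3,3}$ above is really needed to push such $R$ out of the planar class. Everything else is bookkeeping: read off the four families from Theorem \ref{genuslesssthantwo}, invoke the complete bipartite subgraphs already constructed in its proof to eliminate the non-planar sub-cases, and observe that the rings that survive have nil-graphs that are manifestly planar (a triangle with three pendants, or a subgraph of $K_4$).
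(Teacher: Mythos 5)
Your proposal is correct and follows the same overall route as the paper: the paper states this corollary with no proof beyond the remark that it is ``immediate from the proof of the previous theorem.'' However, your write-up is not merely a transcription of that proof --- it supplies one step that is genuinely missing, and in fact corrects an error in the paper. In the converse direction of Theorem \ref{genuslesssthantwo}(ii), the paper asserts that if $R\cong F_1\times F_2\times R_3$ with $R_3$ having \emph{at most one} non-trivial ideal, then $\mathbb{AG}_N(R)$ is planar; taken literally this contradicts the corollary, which admits only the product of three fields in the three-maximal-ideal case. Your explicit $K_{3,3}$ settles the matter in favour of the corollary: with $\mathfrak{c}$ the non-trivial (nilpotent) ideal of $R_3$, one has ${\rm Nil}(R)=(0)\times(0)\times\mathfrak{c}$, and each of $F_1\times(0)\times(0)$, $F_1\times(0)\times\mathfrak{c}$, $(0)\times(0)\times R_3$ is adjacent to each of $(0)\times F_2\times(0)$, $(0)\times F_2\times\mathfrak{c}$, $(0)\times(0)\times\mathfrak{c}$, since all the relevant products land in ${\rm Nil}(R)$ (using $\mathfrak{c}^2=(0)$ and $R_3\mathfrak{c}=\mathfrak{c}$); so $\gamma(\mathbb{AG}_N(R))\geq 1$ and $R_3$ must be a field. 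The remaining bookkeeping --- excluding $|{\rm Max}(R)|=4$ via the Euler-formula computation of genus $1$, reading off the surviving two-maximal-ideal and local cases from Theorem \ref{genuslesssthantwo}(iii)--(iv) and Remark \ref{niladjacent}, and checking planarity of the triangle-with-pendants and of subgraphs of $K_4$ --- is accurate. In short: same approach as the paper, but your version is the one that actually proves the statement, because the paper's ``immediate'' derivation rests on a sentence in the theorem's proof that is false for $R_3$ with exactly one non-trivial ideal.
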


 We close this paper with the following example.

\begin{example}\label{ex2}
\rm(i) Suppose that $R\cong\frac{\mathbb{Z}_6[x]}{(x^m)}$, where $m\geq 2$. Let $I_1=(3)$, $I_2=(3x)$, $I_3=(3x+3)$, $J_1=(2)$, $J_2=(4)$, $J_3=(2x)$, $J_4=(4x)$, $J_5=(2x+2)$, $J_6=(4x+2)$ and $J_7=(2x+4)$. Then one can check that these ideals are distinct vertices of $\mathbb{AG}_N(R)$. Also, every $I_i$ ($1\leq i\leq 3$) is adjacent to every $J_k$ ($1\leq k\leq 7$). Thus $K_{3,7}$ is a subgraph of $\mathbb{AG}_N(R)$ and so the formula of genus for the complete bipartite graphs implies that $\gamma(\mathbb{AG}_N(R))\geq 2$.\\
\rm(ii) Let $R\cong\frac{\mathbb{Z}_4[x]}{(x^3)}$. Set $I_1=(2x)$, $I_2=(2x^2)$, $I_3=(2x+2x^2)$, $J_1=(2)$, $J_2=(2+x^2)$, $J_3=(2+2x^2)$, $J_4=(2-x^2)$, $J_5=(2+2x)$, $J_6=(2+2x+x^2)$ and $J_7=(2+2x+2x^2)$. Similar to (i), one can show that every $I_i$ is adjacent to every $J_k$ and so $\gamma(\mathbb{AG}_N(R))\geq 2$.
\end{example}

{}

\end{document}